\newtheorem{thm}{Theorem}[section]
\newtheorem{lem}[thm]{Lemma}
\newtheorem{example}[thm]{Example}
\theoremstyle{notice}
\newtheorem{notice}[thm]{Notice}
\theoremstyle{proposition}
\newtheorem{prop}{Proposition}[section]
\newtheorem{cor}[thm]{Corollary}
\theoremstyle{definition}
\newtheorem{defn}{Definition}[section]
\newtheorem{rem}{Remark}[section]
\numberwithin{equation}{section}
\begin{document}
\hyphenpenalty=100000

\begin{center}

{\Large \textbf{\\On Topological Structure of the First Non-abelian Cohomology of Topological Groups }}\\[5mm]
{\large \textbf{H. Sahleh$^\mathrm{1}$\footnote{\emph{$^\mathrm{1}$E-mail: sahleh@guilan.ac.ir $^\mathrm{2}$E-mail: h.e.koshkoshi@guilan.ac.ir}}  and H.E. Koshkoshi$^\mathrm{2}$}}\\[1mm]
$^\mathrm{1}${\footnotesize \it Department of  Mathematics, Faculty of Mathematical Sciences, University of Guilan,\\ P. O. Box 1914, Rasht, Iran}\\[1mm]
$^\mathrm{2}${\footnotesize \it Department of  Mathematics, Faculty of Mathematical Sciences, University of Guilan}
\end{center}

\begin{center}
\textbf{Abstract}
\end{center}
\par Let $G$, $R$, and $A$ be topological groups. Suppose that $G$ and $R$ act continuously on $A$, and $G$ acts continuously on $R$. In this paper, we define a partially crossed topological $G-R$-bimodule $(A,\mu)$, where $\mu:A\rightarrow R$ is a continuous homomorphism. Let $Der_{c}(G,(A,\mu))$ be the set of all $(\alpha,r)$ such that $\alpha:G\rightarrow A$ is a continuous crossed homomorphism and  $\mu\alpha(g)=r^{g}r^{-1}$. We introduce a topology on $Der_{c}(G,(A,\mu))$. We show that $Der_{c}(G,(A,\mu))$ is a topological group, wherever $G$ and $R$ are locally compact. We define the first cohomology, $H^{1}(G,(A,\mu))$, of $G$ with coefficients in $(A,\mu)$ as a quotient space of $Der_{c}(G,(A,\mu))$. Also, we state conditions under which $H^{1}(G,(A,\mu))$ is a topological group. Finally, we show that under what conditions $H^{1}(G,(A,\mu))$ is one of the following:  $k$-space, discrete, locally compact and  compact.
\\[1mm]
\\
\footnotesize{\it{Keywords:} Non-abelian cohomology of topological groups; Partially crossed topological bimodule; Evaluation map; Compactly generated group}\\[1mm]
\footnotesize{{2010 Mathematics Subject Classification:} Primary 22A05; 20J06; Secondary 18G50}

\section{Introduction}\label{section 1}
The first non-abelian cohomology of groups with coefficients in crossed modules (algebraically)  was introduced by Guin \cite{4'}. The Guin's approach is extended by Inassaridze to any dimension with coefficients in (partially) crossed bimodules (\cite{6},\cite{7}). Hu \cite{6'} defined the  cohomology of topological groups  with coefficients in abelian topological modules. This paper is a part of an investigation about \emph{non-abelian cohomology of topological groups}.  We consider the first non-abelian cohomology in the topological context.  The methods used here are motivated by \cite{6} and \cite{7}.
\par All topological groups are assumed to be Hausdorff (not necessarily abelian), unless otherwise specified.  Let $G$  and $A$ be topological groups. It is said that $A$ is a topological $G$-module, whenever $G$   acts continuously on the  left of $A$. For all $g\in G$ and $a\in A$ we denote the action of $g$ on $a$ by $^{g}a$. The centre and the commutator of a topological group $G$ is denoted by $Z(G)$ and $[G,G]$, respectively. If $G$ and $H$ are topological groups and $f:G\to H$ is a continuous homomorphism we denote by $\bar{f}:G\to f(G)$ the restricted map of $f$ on its range and by $\mathbf{1}:G\rightarrow H$ the trivial homomorphism. The topological isomorphism and isomorphism are denoted respectively by $"\simeq" $ and $"\cong" $. If the topological groups $G$ and $R$ act continuously on a topological group $A$, then the notation $^{gr}a$ means $^{g}(^{r}a)$, $g\in G$, $r\in R$, $a\in A$. We assume that every topological group acts on itself by conjugation.
\par In section \ref{section 2}, we define precrossed, partially crossed and crossed topological $R$-module $(A,\mu)$, where $A$ is a topological $R$-module and $\mu:A\rightarrow R$ is a continuous homomorphism. Also, we generalize, these definitions to precrossed, partially crossed and crossed topological $G-R$-bimodule $(A,\mu)$, when $G$ and $R$ act continuously on $A$, and $G$ acts continuously on $R$. We define the set $Der_{c}(G,(A,\mu))$, for a partially crossed  topological $G-R$-bimodule. We denote the set of all continuous maps from $G$ into $A$, with compact-open topology, by $\mathcal{C}_{k}(G,A)$. Since $Der_{c}(G,(A,\mu))\subset \mathcal{C}_{k}(G,A)\times R$, then we may consider  $Der_{c}(G,(A,\mu))$ as a topological subspace of $\mathcal{C}_{k}(G,A)\times R$. We show that $Der_{c}(G,(A,\mu))$ is a topological group, whenever $G$ and $R$ are locally compact (Theorem \ref{thm 2.16}). In addition, we prove that $Der_{c}(G,(A,\mu))$ is a topological $G$-module. Furthermore, we show that under what conditions,  $Der_{c}(G,(A,\mu))$ is a precrossed topological $G-R$- bimodule (Proposition \ref{prop 2.23}).
\par In section \ref{section 3}, we define $H^{1}(G,(A,\mu))$ as a quotient of $Der_{c}(G,(A,\mu))$, where $(A,\mu)$ is a partially crossed topological $G-R$-bimodule. We state conditions under which $H^{1}(G,(A,\mu))$ is a topological group (see Theorem \ref{thm 3.2}).  Moreover, since each partially crossed topological $G$-module{can be naturally viewed} as a partially crossed topological $G-G$-bimodule, then we may define $H^{1}(G,(A,\mu))$, when $(A,\mu)$ is a partially crossed topological $G$-module. Finally, we find conditions under which $H^{1}(G,(A,\mu))$ is one of the following:  $k$-space, discrete, locally compact and  compact.

\section{Partially Crossed topological $G-R$-bimodule $(A,\mu)$}\label{section 2}
 In this section, we define a partially crossed topological $G-R$-bimodule $(A,\mu)$. We give some examples of precrossed, {partially} crossed and crossed topological $G-R$-bimodules. Also, {we} define $Der_{c}(G,(A,\mu))$ and prove that if $G$ and $R$ are locally compact, then  $Der_{c}(G,(A,\mu))$ is a  topological group. Moreover, if the topological groups $G$ and $R$ act continuously on each other and on $A$ compatibly, then $(Der_{c}(G,(A,\mu)),\gamma)$ is a precrossed topological $G-R$-bimodule, where $\gamma:Der_{c}(G,(A,\mu))\rightarrow R$, $(\alpha,r)\mapsto r$.
\begin{defn} By a precrossed topological $R$-module we mean a pair $(A,\mu)$  where $A$ is a topological $R$-module and  $\mu:A\to R$ is a continuous homomorphism such that
$$\mu(^{r}a)=\ ^{r}\mu(a), \forall r\in R, a\in A.$$
\par If in addition we have the \emph{Pieffer identity}
\begin{center}
$^{\mu(a)}b=\ ^{a}b$, $\forall a, b\in A,$
\end{center}
 then $(A,\mu)$ is called a crossed topological $R$-module.
 \label{def 2.1}\end{defn}
 \begin{defn} A precrossed topological $R$-module $(A,\mu)$ is said to be a partially crossed topological $R$-module, whenever it satisfies the following equality
$$^{\mu(a)}b=\ ^{a}b,$$ for all $b\in A$ and for all $a\in A$ such that $\mu(a)\in [R,R]$.
\label{def 2.2}\end{defn}
 It is clear that every crossed topological $R$-module is  a partially crossed topological $R$-module.
\begin{example} Suppose that $A$ is a non-abelian topological group with nilpotency class of two (i.e., $[A,A]\subseteq Z(A)$). Take $R=A/\overline{[A,A]}$. Let $\pi:A\rightarrow R$ be the canonical surjective map and suppose that $R$ acts trivially on $A$. It is clear that $^{\pi(a)}b=\ ^{a}b,$ for all $b\in A$ if and only if $a\in Z(A)$. Hence, $(A,\pi)$ is a partially crossed topological $R$-module which is not a crossed topological $R$-module.
\label{example 2.3}\end{example}
\begin{defn} Let $G$, $R$ and $A$ be topological groups. A precrossed topological $R$-module $(A,\mu)$ is said to be  a precrossed topological $G-R$-bimodule, whenever
\begin{itemize}
\item[(1)] $G$  acts continuously on $R$ and $A$;
\item[(2)] $\mu:A\rightarrow R$ is a continuous $G$-homomorphism;
\item[(3)] $^{(^gr)}a=\ ^{grg^{-1}}a$ (i.e., compatibility condition) for all $g\in G$, $r\in R$ and $a\in A$.
\end{itemize}
\label{def 2.4}\end{defn}
 \begin{defn} A precrossed topological $G-R$-bimodule $(A,\mu)$ is said to be a crossed topological $G-R$-bimodule, if  $(A,\mu)$ is a crossed topological $R$-module.
\label{def 2.5}\end{defn}
\begin{example} (1) Let $A$ be an arbitrary topological G-module. Then $Z(A)$ is a topological $G$-module. Since $A$ is Hausdorff, then $Z(A)$ is a closed subgroup of $A$. Thus, the quotient group $R=A/Z(A)$ is  Hausdorff. Now, we define an action of $R$ on $A$ and an
action of $G$ on $R$ by:
$$
 ^{aZ(A)}b=\ ^{a}b, \forall a,b\in A, \ \ \ \ ^{g}(aZ(A))=\ ^{g}a, \forall g\in G, a\in A. \eqno{(2.1)}
$$
Let $\pi_{A}:A\rightarrow R$  be the canonical homomorphism. It is easy to see that under $(2.1)$ the pair
$(A,\pi_{A})$ is a  crossed topological $G-R$-bimodule.
\begin{itemize}
  \item[(2)] By part (1), for any topological group $G$ the pair $(G,\pi_{G})$ is a crossed topological $G-G/Z(G)$-bimodule.
\end{itemize}
\label{example 2.6}\end{example}
\begin{defn} A precrossed topological $G-R$-bimodule  $(A,\mu)$ is said to be partially crossed topological $G-R$-bimodule, if  $(A,\mu)$ is a partially crossed topological $R$-module.
\label{def 2.7}\end{defn}
Let $G$ be a locally compact group and $Aut(G)$ the group of
all topological group automorphisms (i.e., continuous and open automorphisms) of $G$ with the \emph{Birkhoff topology} (see \cite{2}, \cite{3} and \cite{5}). This topology is known as the \emph{generalized compact-open topology}. A
neighborhood basis of the identity automorphism consists of sets
$N(C,V) = \{\alpha \in Aut(G): \alpha(x) \in Vx, \alpha^{-1}(x) \in Vx, \forall x \in C\}$, where $C$
is a compact subset of $G$ and $V$ is a neighborhood of the identity of $G$. It is
well-known that $Aut(G)$ is a Hausdorff topological group (see page 40 of \cite{5}). The generalized compact-open topology is finer than the compact-open topology in $Aut(G)$ and if $G$ is compact, then the generalized compact-open topology coincides with compact-open topology in $Aut(G)$ (see page 324 of \cite{3}).
\begin{lem} Let $A$ be a locally compact group and $G$ a topological group. Suppose that $A$ is a topological $G$-module. Then
\begin{itemize}
  \item[(i)] the homomorphism $\imath_{A}:A\rightarrow Aut(A)$, $a\mapsto c_{a}$, is continuous, where $c_{a}(b)=aba^{-1}, \forall b\in A$;
  \item[(ii)] $A$ is a topological $Aut(A)$-module by the action $^{\alpha}x=\alpha(x)$, $\forall \alpha \in Aut(A), x\in A$;
  \item[(iii)] $Aut(A)$ is a topological $G$-module by the action $(^{g}\alpha)(x)=\ ^{g}\alpha(^{g^{-1}}x)$, $\forall g\in G, \alpha \in Aut(A), x\in A$.
\end{itemize}
\label{lem 2.8}\end{lem}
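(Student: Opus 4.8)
The plan is to verify the three claims in order, leaning on the description of the Birkhoff topology recalled just before the lemma and on the standard properties of the compact-open topology for locally compact spaces. For (i), I would fix a basic neighborhood $N(C,V)$ of the identity in $Aut(A)$, where $C\subseteq A$ is compact and $V$ is an identity neighborhood in $A$, and exhibit an identity neighborhood $U$ in $A$ with $\imath_A(U)\subseteq N(C,V)$. Concretely, one wants $uxu^{-1}\in Vx$ and $u^{-1}xu\in Vx$ for all $x\in C$, i.e. $u(xu^{-1}x^{-1})\in V$ and $u^{-1}(xux^{-1})\in V$; since the maps $(u,x)\mapsto u\cdot xu^{-1}x^{-1}$ and $(u,x)\mapsto u^{-1}\cdot xux^{-1}$ are continuous and send $(e,x)$ to $e$, compactness of $C$ together with a standard tube-lemma argument produces such a $U$. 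This is the step I expect to be the most delicate, purely because the Birkhoff topology involves controlling both $\alpha$ and $\alpha^{-1}$ simultaneously, but it is routine once the tube lemma is invoked.

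For (ii), the action map $Aut(A)\times A\to A$, $(\alpha,x)\mapsto\alpha(x)$, is exactly the evaluation map of the (generalized) compact-open topology; since $A$ is locally compact Hausdorff, evaluation is continuous for the compact-open topology, and the Birkhoff topology is finer (as recalled in the paragraph preceding the lemma), so continuity is inherited. That $^{\alpha}(^{\beta}x)=\,^{\alpha\beta}x$ and $^{e}x=x$ is immediate from the definition. Hence $A$ is a topological $Aut(A)$-module.

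For (iii), first check the algebraic module axioms for the formula $(^{g}\alpha)(x)=\,^{g}\alpha(^{g^{-1}}x)$: each $^{g}\alpha$ is again a continuous open automorphism (being a conjugate of $\alpha$ by the homeomorphism $x\mapsto\,^{g}x$), the assignment $\alpha\mapsto\,^{g}\alpha$ is a group homomorphism of $Aut(A)$, and $^{gh}\alpha=\,^{g}(^{h}\alpha)$, $^{e}\alpha=\alpha$. For continuity of $G\times Aut(A)\to Aut(A)$, I would again test against a basic neighborhood $N(C,V)$: given $g_0\in G$, $\alpha_0\in Aut(A)$, one needs a neighborhood $W\times N(C',V')$ of $(g_0,\alpha_0)$ whose image lies in $^{g_0}\alpha_0\cdot N(C,V)$. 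Unwinding, the condition $(^{g}\alpha)(x)\in V\cdot(^{g_0}\alpha_0)(x)$ for $x\in C$ translates, after applying $^{g^{-1}}(\,\cdot\,)$ and using joint continuity of the $G$-action on $A$ on the compact set $C$, into a condition of the form $\alpha(y)\in V'\alpha_0(y)$ for $y$ ranging over a compact set close to $^{g_0^{-1}}C$, plus a smallness condition on $g^{-1}g_0$; choosing $C'$ a compact neighborhood of $^{g_0^{-1}}C$ and shrinking $V'$, $W$ appropriately (again via the tube lemma and continuity of inversion in $A$) gives the required neighborhood. The symmetric estimate for $(^{g}\alpha)^{-1}=\,^{g}(\alpha^{-1})$ is handled identically. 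Assembling these three parts completes the proof.
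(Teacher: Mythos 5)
Your proposal is correct, but it is organized quite differently from the paper's. For parts (i) and (ii) the paper offers no argument at all --- it simply cites Grosser--Moskowitz and Hochschild --- whereas you supply proofs, and both are sound: for (i), your tube-lemma argument on the maps $(u,x)\mapsto uxu^{-1}x^{-1}$ and $(u,x)\mapsto u^{-1}xux^{-1}$ gives continuity of $\imath_A$ at the identity, which suffices for a homomorphism into a topological group; for (ii), observing that evaluation is compact-open continuous on a locally compact Hausdorff space and that the Birkhoff topology is finer is exactly the right one-line argument. For (iii), both you and the paper ultimately run a tube-lemma argument over the compact set $C$ and both correctly insist on controlling $\alpha$ and $\alpha^{-1}=({}^{g}\alpha)^{-1}$-data simultaneously, but the paper's route is slicker: it uses part (ii) to get \emph{joint} continuity of the auxiliary maps $\phi((g,\alpha),x)={}^{g}\alpha({}^{g^{-1}}x)x^{-1}$ and $\psi((g,\alpha),x)={}^{g}\alpha^{-1}({}^{g^{-1}}x)x^{-1}$, and then a single application of the tube lemma to the slice $\{(g,\alpha)\}\times C$ finishes the proof, with no explicit choice of $C'$, $V'$, $W$. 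Your hands-on unwinding of $N(C,V)$ also works, but it hides two points that need care and that the paper's bootstrap avoids: (a) replacing ${}^{g^{-1}}V$ by a fixed neighborhood uniformly for $g$ near $g_0$ requires the equicontinuity at the identity that joint continuity of the $G$-action provides, and (b) turning ``$\alpha(y)\in V'\alpha_0(y)$ for $y$ in a compact set'' into membership in a translate $\alpha_0 N(C',V'')$ uses that $\alpha_0$ is an \emph{open} automorphism (equivalently, that translates of the $N(C',V'')$ form a neighborhood basis at $\alpha_0$). Neither is a gap, just extra bookkeeping; on the other hand your formulation of what must be verified in (iii) --- mapping a neighborhood of $(g_0,\alpha_0)$ into the translate ${}^{g_0}\alpha_0\cdot N(C,V)$ --- is the textbook continuity criterion, slightly more explicit than the paper's verification that $\chi^{-1}(N(C,V))$ is open.
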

\begin{proof} For (i) and (ii) see  page 324 of \cite{3}, and Proposition 3.1 of \cite{5}. (iii):  It is enough to prove that the map $\chi:G\times Aut(A)\rightarrow Aut(A)$, $(g,\alpha)\mapsto \ ^{g}\alpha$ is continuous.  By (ii), the maps $\phi:(G\times Aut(A))\times A\rightarrow A$, $((g,\alpha),x)\mapsto \ ^{g}\alpha(^{g^{-1}}x)x^{-1}$ and $\psi:(G\times Aut(A))\times A\rightarrow A$, $((g,\alpha),x)\mapsto \ ^{g}\alpha^{-1}(^{g^{-1}}x)x^{-1}$ are continuous. Let $^{g}\alpha\in N(C,V)$. Then, $\phi((g,\alpha),x)\in V$ and $\psi((g,\alpha),x)\in V$, for all $x\in C$. Thus, $\phi(\{(g,\alpha)\}\times C)\subset V$ and $\psi(\{(g,\alpha)\}\times C)\subset V$. Now,  $\phi^{-1}(V)$ and $\psi^{-1}(V)$ are open in $(G\times Aut(A))\times A$ containing $\{(g,\alpha)\}\times C$. Hence, $\phi^{-1}(V)\cap \psi^{-1}(V)\cap (G\times Aut(A))\times C$ is an open set in $ (G\times Aut(A))\times C$ containing the slice $\{(g,\alpha)\}\times C$ of $ (G\times Aut(A))\times C$. The tube lemma (Lemma 5.8 of \cite{11}) implies that there is an open neighbourhood $U$ of $(g,\alpha)$ in $G\times Aut(A)$ such that the tube $U\times C$ lies in $\phi^{-1}(V)\cap \psi^{-1}(V)$. Then, for every $(h,\beta)\in U$,  $x\in C$, we have $\phi((h,\beta),x)\in V$ and $\psi((h,\beta),x)\in V$, i.e., $^{h}\beta(^{h^{-1}}x)\in Vx$ and $^{h}\beta^{-1}(^{h^{-1}}x)\in Vx$. Therefore, $^{h}\beta\in N(C,V)$, for all $(h,\beta)\in U$. So $\chi$ is continuous.
\end{proof}
\begin{prop} Let $A$ be a topological $G$-module and $A$ a locally compact group. Then, $(A,\imath_{A})$ is a crossed topological $G-Aut(A)$-bimodule, where the homomorphism $\imath_{A}$ and the actions are defined as in Lemma \ref{lem 2.8}.
\label{prop 2.9}\end{prop}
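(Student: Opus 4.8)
The goal is to verify that $(A,\imath_A)$ satisfies all the requirements in Definitions~\ref{def 2.1}, \ref{def 2.4} and \ref{def 2.5}, namely that it is a crossed topological $G$–$Aut(A)$-bimodule. Lemma~\ref{lem 2.8} already supplies the continuity of $\imath_A$, the fact that $A$ is a topological $Aut(A)$-module under evaluation, and the fact that $Aut(A)$ is a topological $G$-module under the conjugation-type action; so the remaining work is purely the algebraic identities, each checked on elements. The plan is to go through them in the following order: (a) the precrossed identity $\mu(\,^{\alpha}a) = \,^{\alpha}\mu(a)$ of Definition~\ref{def 2.1} for $R = Aut(A)$; (b) the Pieffer identity $^{\mu(a)}b = \,^{a}b$ of Definition~\ref{def 2.1}; (c) that $\imath_A$ is a $G$-homomorphism, i.e.\ $\imath_A(^{g}a) = \,^{g}(\imath_A(a))$; and (d) the compatibility condition $^{(^{g}\alpha)}a = \,^{g\alpha g^{-1}}a$ of Definition~\ref{def 2.4}(3), where the conjugation on the right is inside $Aut(A)$.

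For (a): unravelling definitions, $\mu(^{\alpha}a) = \imath_A(\alpha(a)) = c_{\alpha(a)}$, while $^{\alpha}\mu(a) = \,^{\alpha}c_a = \alpha \circ c_a \circ \alpha^{-1}$ by the action of $Aut(A)$ on itself by conjugation; evaluating both on $b \in A$ gives $\alpha(a)b\alpha(a)^{-1}$ versus $\alpha(a\alpha^{-1}(b)a^{-1})$, and these agree because $\alpha$ is a homomorphism. For (b): $^{\mu(a)}b = \,^{c_a}b = c_a(b) = aba^{-1} = \,^{a}b$, which is immediate — indeed this is exactly how the $A$-action on $A$ (conjugation) and the $Aut(A)$-action on $A$ (evaluation) were set up, so the Pieffer identity holds on the nose and $(A,\imath_A)$ is genuinely crossed, not merely partially crossed. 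For (c): $\imath_A(^{g}a) = c_{\,^{g}a}$ and $^{g}(\imath_A(a)) = \,^{g}c_a$, whose value at $x$ is $^{g}c_a(^{g^{-1}}x) = \,^{g}(a \cdot ^{g^{-1}}x \cdot a^{-1})$ by Lemma~\ref{lem 2.8}(iii); since $G$ acts on $A$ by (continuous) automorphisms this equals $^{g}a \cdot x \cdot (^{g}a)^{-1} = c_{\,^{g}a}(x)$, as needed. For (d): both sides are elements of $Aut(A)$ acting on $a\in A$; the left side is $(^{g}\alpha)(a) = \,^{g}\alpha(^{g^{-1}}a)$ by Lemma~\ref{lem 2.8}(iii), and for the right side one observes $g\alpha g^{-1}$ should be read as the conjugate of $\alpha$ by the automorphism $x\mapsto{}^{g}x$ inside $Aut(A)$, whose evaluation at $a$ is again $^{g}\alpha(^{g^{-1}}a)$, so the two sides coincide.

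I do not expect a genuine obstacle here: every step is a one-line element-chase once the actions from Lemma~\ref{lem 2.8} are written out, and the only thing requiring care is bookkeeping — keeping straight which group is acting on which and interpreting the symbol $g\alpha g^{-1}$ in Definition~\ref{def 2.4}(3) correctly as conjugation by $\imath_G(g)$-type data transported into $Aut(A)$ via the $G$-action. The mild subtlety worth a sentence in the write-up is that one must know $G$ acts on $A$ by automorphisms (not just by continuous maps) to make step (c) work, but that is built into the definition of a topological $G$-module. Continuity of all structure maps is entirely handled by Lemma~\ref{lem 2.8}, so nothing further is needed on the topological side.
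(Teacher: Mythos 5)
Your proposal is correct and follows essentially the same route as the paper: continuity is delegated to Lemma~\ref{lem 2.8}, and the four element-wise identities you check --- the $Aut(A)$-equivariance of $\imath_A$, the Pieffer identity, the $G$-equivariance of $\imath_A$, and the compatibility condition $^{(^{g}\alpha)}x = {}^{g\alpha g^{-1}}x$ --- are exactly the four verifications in the paper's proof, with the same one-line computations. The only difference is ordering and the (correct) remark that $^{g\alpha g^{-1}}x$ is to be read as $^{g}({}^{\alpha}({}^{g^{-1}}x))$ per the paper's notational convention.
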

\begin{proof}  By Lemma \ref{lem 2.8}, the homomorphism $\imath_{A}$ and the actions are continuous. Also,
 \par 1. For every $g\in G$ and $a,b\in A$, $\imath_{A}(^{g}a)(b)=c_{^{g}a}(b)=\ ^{g}ab^{g}a^{-1}=\ ^{g}c_{a}(b)$. Hence, $\imath_{A}$ is a $G$-homomorphism.
 \par 2. For every $\alpha\in Aut(A)$  and $x, a\in A$,{} $\imath_{A}(^{\alpha}x)(a)=\imath_{A}(\alpha(x))(a)=c_{\alpha(x)}(a)=\alpha(x)a\alpha(x)^{-1}=\alpha(x\alpha^{-1}(a)x^{-1})=\alpha\circ c_{x}\circ \alpha^{-1}(a)=\ ^{\alpha}c_{x}(a)$. So $\imath_{A}$ is a $Aut(A)$-homomorphism.
 \par 3. For every $a, b\in A$, {} $^{\imath_{A}(a)}b=c_{a}(b)=aba^{-1}=\ ^{a}b$. Thus, the Pieffer identity is satisfied.
  \par 4. The compatibility condition is satisfied. Since for every $g\in G, \alpha\in Aut(A), x\in A$, then $^{^{g}\alpha}x=(^{g}\alpha)(x)=\ ^{g}\alpha(^{g^{-1}}x)=\ ^{g\alpha g^{-1}}x$.
   \par Therefore, $(A,\imath_{A})$ is a crossed topological $G-Aut(A)$-bimodule.
\end{proof}
\begin{rem} In a natural way any precrossed (crossed) topological $R$-module is a precrossed (crossed) topological $R-R$-bimodule.
\label{rem 2.10}\end{rem}
\begin{rem} Let $(A,\mu)$ be a partially crossed (crossed) topological $G-R$-bimodule. Then,  $(A,\overline{\mu})$ is a partially crossed (crossed) topological $G-\mu(A)$-bimodule. Thus, by Proposition \ref{prop 2.9}, for any topological $G$-module $A$ in which $A$ is locally compact, we may associate the crossed topological $G-Inn(A)$-bimodule $(A,\overline{\imath_{A}})$, where $Inn(A)$ is the topological group of all inner automorphisms of $A$.
\label{rem 2.11}\end{rem}
\begin{defn} Let $(A,\mu)$ be a partially crossed topological $G-R$-bimodule. The map $\alpha:G\rightarrow A$ is called a crossed homomorphism whenever, $$\alpha(gh)=\alpha(g)^{g}\alpha(h), \forall g, h\in G.$$ Denote by $Der(G,(A,\mu))$ the set of all pairs $(\alpha,r)$ where $\alpha:G\rightarrow A$ is a crossed homomorphism and $r$ is an element of $R$ such that $$\mu\circ \alpha(g)=\ r^{g}r^{-1}, \forall g\in G.$$
\label{def 2.12}\end{defn}
Let $Der_{c}(G,(A,\mu))=\{(\alpha,r) | (\alpha,r)\in Der(G,(A,\mu))$ and $\alpha$ is continuous$\}$. H. Inassaridze \cite{7} introduced the product $\star$ in $Der(G,(A,\mu))$  by
\begin{center}
$(\alpha,r)\star(\beta,s)=(\alpha*\beta,rs)$, where $\alpha*\beta(g)=\ ^{r}\beta(g)\alpha(g), \forall g\in G.$
\end{center}
\begin{defn} A family $\eta$ of subsets of a topological space
$X$ is called a network on $X$ if for each point $x\in X$ and
each neighbourhood $U$ of $x$ there exists $P\in \eta$ such that
$x \in P\subset U$. A network $\eta$ is said to be compact
(closed) if all its elements are compact (closed) subspaces of
$X$. We say that a closed network $\eta$ is  hereditarily closed
if for each $P\in \eta$ and any closed set $B$ in  $P$,
{} $B\in \eta$. \label{def 2.13}\end{defn}
Let $X$ and $Y$ be topological spaces. The set of all continuous functions $f:X\rightarrow Y$ is denoted by $\mathcal{C}(X,Y)$. Suppose that $U\subset X$ and $V\subset Y$. Take
$$[U, V] = \{f \in \mathcal{C}(X,Y) : f(U) \subset V\}.$$
Let $X$ and $Y$ be topological spaces, and $\eta$ a network in $X$. The family
$\{[P, V] : P \in \eta$ and $V$ is open in $Y\}$ is a subbase {for} a topology on $\mathcal{C}(X,Y)$,
called the $\eta$-topology. We denote the set $\mathcal{C}(X,Y)$ with the $\eta$-topology by $\mathcal{C}_{\eta}(X,Y)$. If $\eta$ is the family of all singleton subsets of $X$, then the $\eta$-topology is called the point-open topology; in this case $\mathcal{C}_{\eta}(X,Y)$ is
denoted by $\mathcal{C}_{p}(X,Y)$. If $\eta$ is the family of all compact subspaces of $X$, then the
$\eta$-topology is called the compact-open topology and $\mathcal{C}_{\eta}(X,Y)$ is denoted by
$\mathcal{C}_{k}(X,Y)$ (see \cite{9}).
\par Now, suppose that $A$ is a topological group, then $\mathcal{C}(X, A)$ is a
group. For $f, g\in \mathcal{C}(X, A)$ the product, $f.g$, is defined by
$$(f.g)(x) = f(x).g(x), \forall x \in X.  \eqno{(2.2)}$$
\begin{lem} Let $X$ be a Tychonoff space and $A$ a topological group. If $\eta$ is a hereditarily closed, compact network on $X$, then  under  the product $(2.2)$, $\mathcal{C}_{\eta}(X,A)$ is a topological group. In particular, $\mathcal{C}_{p}(X,A)$ and $\mathcal{C}_{k}(X,A)$ are topological groups.
\label{lem 2.14}\end{lem}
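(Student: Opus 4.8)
The plan is to show that, under the pointwise product $(2.2)$, the multiplication $m\colon(f,g)\mapsto f.g$ and the inversion $\iota\colon f\mapsto f^{-1}$ (where $f^{-1}(x)=(f(x))^{-1}$) are continuous on $\mathcal{C}_{\eta}(X,A)$, and that this space is Hausdorff; that $\mathcal{C}(X,A)$ is a group under $(2.2)$ has already been noted, and $f^{-1}$ is continuous, being the composite of $f$ with the inversion of $A$. Since the sets $[P,V]$ with $P\in\eta$ and $V$ open in $A$ form a subbase of the $\eta$-topology, I would only need to control preimages of such sets. For $\iota$ this is quick: $\iota^{-1}([P,V])=\{f:(f(x))^{-1}\in V\ \text{for all}\ x\in P\}=[P,V^{-1}]$ with $V^{-1}=\{a^{-1}:a\in V\}$ open in $A$, so $\iota$ is continuous, and in fact a homeomorphism since it is an involution.

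The substance is the continuity of $m$. I would fix $(f,g)$ with $f.g\in[P,V]$, noting that $P$ is compact. For each $x\in P$, using $f(x)g(x)\in V$ and continuity of the multiplication of $A$, I obtain open sets $V_x\ni f(x)$ and $W_x\ni g(x)$ with $V_xW_x\subset V$, and then by continuity of $f$ and $g$ an open set $O_x\ni x$ in $X$ with $f(O_x)\subset V_x$ and $g(O_x)\subset W_x$. Using that $X$ is Tychonoff, hence regular, I would pick for each $x$ an open $U_x$ with $x\in U_x\subset\overline{U_x}\subset O_x$; compactness of $P$ then yields finitely many $U_{x_1},\dots,U_{x_n}$ covering $P$. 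Setting $F_i:=\overline{U_{x_i}}\cap P$: since $X$ is Hausdorff, $P$ is closed in $X$, so each $F_i$ is closed in $X$; and since $F_i\subset P\in\eta$ and $\eta$ is hereditarily closed, $F_i\in\eta$. Moreover $F_i\subset O_{x_i}$ and $\bigcup_iF_i=P$. Then $\bigcap_i[F_i,V_{x_i}]$ and $\bigcap_i[F_i,W_{x_i}]$ are basic open neighbourhoods of $f$ and of $g$, and I would check that their product lands in $[P,V]$: for $x\in P$, choosing $i$ with $x\in F_i$ gives $f'(x)g'(x)\in V_{x_i}W_{x_i}\subset V$ for any $f'\in\bigcap_i[F_i,V_{x_i}]$, $g'\in\bigcap_i[F_i,W_{x_i}]$. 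Hence $m^{-1}([P,V])$ is open and $m$ is continuous.

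For the Hausdorff property I would just verify that points are closed: given $f\neq h$, take $x_0$ with $f(x_0)\neq h(x_0)$ and, using that $A$ is Hausdorff, an open $V\ni f(x_0)$ with $h(x_0)\notin V$; since $\eta$ is a network, some $P\in\eta$ satisfies $x_0\in P\subset\{x\in X:f(x)\in V\}$, whence $f\in[P,V]$ while $h\notin[P,V]$. Finally, for the ``in particular'' clause I would observe that both the family of all singletons of $X$ and the family of all compact subspaces of $X$ are hereditarily closed, compact networks on $X$ (singletons are compact and already form a network; a closed subspace of a compact Hausdorff space is compact), so the first part applies to them. The step I expect to be the main obstacle is the one in the middle: turning a finite \emph{open} cover of the compact set $P$ into a finite cover of $P$ by \emph{members of $\eta$} sitting inside the respective opens. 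This is exactly where regularity of $X$ together with hereditary closedness of $\eta$ (and $P$ being closed in $X$) are essential; without hereditary closedness the argument would break down.
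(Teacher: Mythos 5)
Your proof is correct. For the record, the paper does not actually prove this lemma: it simply cites Theorem 1.1.7 of McCoy--Ntantu, \emph{Properties of spaces of continuous functions}, and notes that singletons and compact sets give hereditarily closed compact networks. So you have supplied in full the argument the paper outsources, and it is the standard one: continuity of inversion via $\iota^{-1}([P,V])=[P,V^{-1}]$, and continuity of multiplication by refining the open cover $\{O_x\}$ of the compact $P$ to a finite cover by sets $F_i=\overline{U_{x_i}}\cap P$ which lie in $\eta$ by hereditary closedness --- you correctly identify this as the step where the Tychonoff (regularity) hypothesis and hereditary closedness are both used, and where $P$ being closed (compact in Hausdorff $X$) guarantees $F_i$ is closed in $X$. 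Two cosmetic remarks. First, your separation argument establishes that points are closed ($T_1$); since the paper's convention is that topological groups are Hausdorff, you should add the one-line observation that a group with continuous operations which is $T_1$ is automatically Hausdorff (the diagonal is the preimage of the closed point $\{1\}$ under $(f,g)\mapsto f.g^{-1}$). Second, for the ``in particular'' clause, the family of singletons is hereditarily closed only if one admits $\emptyset$ (or reads ``closed subset'' as nonempty); this is harmless since $[\emptyset,V]$ is the whole space, but it is the kind of degenerate case worth a parenthetical.
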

\begin{proof} See Theorem 1.1.7 of \cite{9}. In particular, the set of all finite {subsets} of $X$ and the set of all compact {subsets} of $X$ are hereditarily  closed, compact networks on $X$.
\end{proof}
 Suppose that $X$ is a topological space and $A$ a topological $R$-module. Then, $\mathcal{C}(X, A)$ is an $R$-module. If $r\in R, f\in \mathcal{C}(X, A)$, then the action $^{r}f$ is defined by
$$(^{r}f)(x) =\ ^{r}(f(x)), \forall x \in X. \eqno{(2.3)}$$
\begin{prop} Let $X$ be a locally compact Hausdorff space, $R$ a locally compact group and $A$ a topological $R$-module. Then, by (2.3), $\mathcal{C}_{k}(X,A)$ is a topological $R$-module.
\label{prop 2.15}\end{prop}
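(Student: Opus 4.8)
The plan is to check the two requirements in the definition of a topological $R$-module for $\mathcal{C}_{k}(X,A)$ under the product (2.2) and the action (2.3): that it is a topological group, and that the action is continuous and by topological automorphisms. The first is immediate from Lemma \ref{lem 2.14}, since a locally compact Hausdorff space is Tychonoff and the family of all its compact subsets is a hereditarily closed, compact network, so $\mathcal{C}_{k}(X,A)$ is a topological group. That the action is by automorphisms is a routine pointwise verification: for $f,h\in\mathcal{C}(X,A)$, $r,s\in R$, $x\in X$ one has ${}^{r}(f.h)(x)={}^{r}(f(x).h(x))={}^{r}(f(x)).\,{}^{r}(h(x))=({}^{r}f.\,{}^{r}h)(x)$, and likewise ${}^{1}f=f$ and ${}^{rs}f={}^{r}({}^{s}f)$; each map $f\mapsto{}^{r}f$ is a homeomorphism of $\mathcal{C}_{k}(X,A)$ because it is post-composition with the homeomorphism $a\mapsto{}^{r}a$ of $A$. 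Hence the entire content is the continuity of $\Phi\colon R\times\mathcal{C}_{k}(X,A)\to\mathcal{C}_{k}(X,A)$, $(r,f)\mapsto{}^{r}f$.

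For this I would exploit that $X$ is locally compact Hausdorff via the classical fact that then the evaluation map $e\colon\mathcal{C}_{k}(X,A)\times X\to A$, $(f,x)\mapsto f(x)$, is continuous. Consequently the composite
$$R\times\mathcal{C}_{k}(X,A)\times X \xrightarrow{\ \mathrm{id}_{R}\times e\ } R\times A \xrightarrow{\ \text{action}\ } A,\qquad (r,f,x)\longmapsto {}^{r}(f(x)),$$
is continuous. Passing to adjoints — a continuous map $Z\times X\to A$ always induces a continuous map $Z\to\mathcal{C}_{k}(X,A)$ into the compact-open topology — and taking $Z=R\times\mathcal{C}_{k}(X,A)$, the map induced by the displayed composite is exactly $\Phi$; therefore $\Phi$ is continuous.

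In case one prefers to avoid quoting the evaluation/adjunction machinery, the same conclusion can be obtained directly on a subbasic open set $[C,V]$ with $C\subset X$ compact and $V\subset A$ open. Given $(r_{0},f_{0})$ with ${}^{r_{0}}f_{0}\in[C,V]$, for each $x\in C$ continuity of the action at $(r_{0},f_{0}(x))$ yields open sets $U_{x}\ni r_{0}$ in $R$ and $V_{x}\ni f_{0}(x)$ in $A$ with ${}^{r}a\in V$ whenever $(r,a)\in U_{x}\times V_{x}$; then continuity of $f_{0}$ together with local compactness of $X$ gives an open $O_{x}\ni x$ with $\overline{O_{x}}$ compact and $f_{0}(\overline{O_{x}})\subset V_{x}$. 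Choosing a finite subcover $O_{x_{1}},\dots,O_{x_{n}}$ of $C$, put $U=\bigcap_{i}U_{x_{i}}$, $K_{i}=\overline{O_{x_{i}}}\cap C$ (compact) and $W=\bigcap_{i}[K_{i},V_{x_{i}}]$; then $f_{0}\in W$, and for $r\in U$, $f\in W$, $x\in C$ we have $x\in K_{i}$ for some $i$, hence $f(x)\in V_{x_{i}}$ and $r\in U_{x_{i}}$, so ${}^{r}(f(x))\in V$, i.e.\ ${}^{r}f\in[C,V]$. Thus $\Phi^{-1}([C,V])$ is open.

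The only real obstacle is the continuity of $\Phi$, and it is precisely here that the hypothesis ``$X$ locally compact Hausdorff'' is used — either through continuity of the evaluation map $e$ (hence of the composite above) followed by the purely formal adjoint step, or, in the direct argument, through the choice of the relatively compact neighbourhoods $O_{x}$ and the passage to a finite subcover of $C$. Everything else — the group structure, the module axioms, and the fact that each ${}^{r}(\cdot)$ is a homeomorphism — is formal.
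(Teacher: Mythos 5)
Your proof is correct, and the core idea (continuity of the evaluation map on $\mathcal{C}_{k}(X,A)\times X$ for $X$ locally compact Hausdorff, followed by an exponential-law adjunction) is the same as the paper's; but your decomposition is genuinely more economical. The paper forms the same composite $(r,x,f)\mapsto{}^{r}(f(x))$, curries out $R\times X$ to get a map into $\mathcal{C}_{k}(R\times X,A)$, applies the exponential homeomorphism $\Lambda$ onto $\mathcal{C}_{k}(R,\mathcal{C}_{k}(X,A))$, and then un-curries the $R$-variable --- this last step is the ``hard'' direction of the adjunction and is where the paper invokes local compactness of $R$. You instead curry out only the $X$-variable, which uses just the ``easy'' direction (a continuous map $Z\times X\to A$ always induces a continuous map $Z\to\mathcal{C}_{k}(X,A)$, with no hypotheses on $Z$ or $X$), landing directly on $\Phi\colon R\times\mathcal{C}_{k}(X,A)\to\mathcal{C}_{k}(X,A)$. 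This buys two things: it avoids the exponential-law homeomorphism entirely, and it shows that the hypothesis that $R$ be locally compact is not actually needed for this proposition --- only continuity of the $R$-action on $A$ and local compactness of $X$ are used. Your second, subbasis-level argument with the tube-style finite subcover of $C$ is a valid self-contained alternative, and your explicit verification that each $f\mapsto{}^{r}f$ is a group automorphism fills in a step the paper leaves implicit.
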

\begin{proof} Since $X$ is a locally compact Hausdorff space, then by Lemma \ref{lem 2.14}, $\mathcal{C}_{k}(X,A)$ is a topological group. By Theorem 5.3 of \cite{11}, the evaluation map $e:X\times \mathcal{C}_{k}(X,A)\rightarrow A$, $(x,f)\mapsto f(x)$ is continuous. Thus, the map $F:R\times X\times \mathcal{C}_{k}(X,A) \rightarrow A$, $(r,x,f)\mapsto \ ^{r}f(x)$ is continuous. By Corollary 5.4 of \cite{11},  the induced map $\hat{F}:\mathcal{C}_{k}(X,A) \rightarrow \mathcal{C}_{k}(R\times X,A)$  is continuous, where $\hat{F}$ is defined by $$\hat{F}(f)(r,x)=\ ^{r}f(x).$$
On the other hand the exponential
map $\Lambda: \mathcal{C}_{k}(R\times X,A)\rightarrow \mathcal{C}_{k}(R,C_{k}(X,A))$, $u\mapsto \Lambda(u)$;  $\Lambda(u)(r)(x)=u(r,x)$, is a homeomorphism (see Corollary 2.5.7 of \cite{9}. Therefore, $\Lambda \circ \hat{F}:\mathcal{C}_{k}(X,A)\rightarrow \mathcal{C}_{k}(R,\mathcal{C}_{k}(X,A))$ is a continuous map. Since $R$ is locally compact and Hausdorff then by Corollary 5.4 of \cite{11}, $\Lambda \circ \hat{F}$  induces  the continuous map $\chi:R\times \mathcal{C}_{k}(X,A)\rightarrow \mathcal{C}_{k}(X,A)$, $\chi(r,f)=(\Lambda \circ \hat{F}(f))(r)=\ ^{r}f$.
 Therefore,  $\mathcal{C}_{k}(X,A)$ is a topological $R$-module.
\end{proof}
Note that $Der_{c}(G,(A,\mu))\subset Der_{c}(G,A)\times R\subset \mathcal{C}(G,A)\times R$, where $Der_{c}(G,A)=\{\alpha| \alpha$ is a continuous crossed homomorphism from $G$ into $A$$\}$. Thus, $\mathcal{C}_{k}(G,A)\times R$ induces the subspace topology on  $Der_{c}(G,(A,\mu))$. Here,  the induced subspace topology on  $Der_{c}(G,(A,\mu))$ is called the \emph{induced topology by compact-open topology}. From now on, we consider $Der_{c}(G,(A,\mu))$ with this topology.
\begin{thm} Let $G$ and $R$ be  locally compact groups and $(A,\mu)$ a partially crossed topological $G-R$-bimodule.  Then, $(Der_{c}(G,(A,\mu)),\star)$  is a topological group.
\label{thm 2.16}\end{thm}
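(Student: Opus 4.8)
The plan is to show that $Der_{c}(G,(A,\mu))$ is a subgroup of the topological group $\mathcal{C}_{k}(G,A)\times R$ that is closed under the relevant operations, and that the operation $\star$ makes it a topological group with the subspace topology. First I would verify the purely algebraic facts (these are essentially in Inassaridze \cite{7}): that $(\alpha*\beta,rs)\in Der(G,(A,\mu))$ whenever $(\alpha,r),(\beta,s)\in Der(G,(A,\mu))$, that the identity is $(\mathbf{1},1)$ where $\mathbf{1}(g)=1$ for all $g$, and that the inverse of $(\alpha,r)$ is $(\alpha',r^{-1})$ where $\alpha'(g)=\ ^{r^{-1}}(\alpha(g)^{-1})$. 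One must check here that $\alpha*\beta$ and $\alpha'$ are again \emph{continuous} crossed homomorphisms, and that the defining relation $\mu\alpha(g)=r^{g}r^{-1}$ is preserved; continuity of $\alpha*\beta$ follows since it is built from $\alpha$, $\beta$, the action of $R$ on $A$, and multiplication in $A$, all continuous, and similarly for $\alpha'$. This shows $(Der_{c}(G,(A,\mu)),\star)$ is a group.

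Next I would address continuity of the group operations in the subspace topology inherited from $\mathcal{C}_{k}(G,A)\times R$. The key external inputs are: since $G$ is locally compact Hausdorff, $\mathcal{C}_{k}(G,A)$ is a topological group (Lemma \ref{lem 2.14}); since moreover $R$ is locally compact, $R$ acts continuously on $\mathcal{C}_{k}(G,A)$ by (2.3), i.e. $\mathcal{C}_{k}(G,A)$ is a topological $R$-module (Proposition \ref{prop 2.15}). Now the multiplication $\big((\alpha,r),(\beta,s)\big)\mapsto(\ ^{r}\beta\cdot\alpha,\ rs)$ is a composite of: the projection to $R\times\mathcal{C}_{k}(G,A)$ picking out $(r,\beta)$, the continuous action map $R\times\mathcal{C}_{k}(G,A)\to\mathcal{C}_{k}(G,A)$, then multiplication in the topological group $\mathcal{C}_{k}(G,A)$ against $\alpha$, paired with multiplication $rs$ in $R$; hence it is continuous as a map $\big(\mathcal{C}_{k}(G,A)\times R\big)^{2}\to\mathcal{C}_{k}(G,A)\times R$, and restricts to a continuous map on the subspace $Der_{c}(G,(A,\mu))$. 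Similarly inversion $(\alpha,r)\mapsto(\ ^{r^{-1}}(\alpha^{-1}),\ r^{-1})$ is a composite of inversion in $\mathcal{C}_{k}(G,A)$, the continuous $R$-action, and inversion in $R$, so it is continuous on the subspace.

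Finally I would note that since $\mathcal{C}_{k}(G,A)\times R$ is Hausdorff (being a product of Hausdorff topological groups, using that $A$ and $R$ are Hausdorff), its subspace $Der_{c}(G,(A,\mu))$ is Hausdorff, so $(Der_{c}(G,(A,\mu)),\star)$ is indeed a topological group. I expect the only genuinely delicate point to be continuity of multiplication: one must be careful that the twisted product $\alpha*\beta(g)=\ ^{r}\beta(g)\alpha(g)$ is handled at the level of the function space — i.e. that $(r,\beta)\mapsto\ ^{r}\beta$ is continuous into $\mathcal{C}_{k}(G,A)$, which is exactly where local compactness of $G$ and $R$ is used via Proposition \ref{prop 2.15}; without that the naive pointwise argument does not give joint continuity. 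The algebraic closure checks are routine, and Hausdorffness is immediate.
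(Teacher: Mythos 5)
Your proposal is correct and follows essentially the same route as the paper: the algebraic subgroup structure is taken from Inassaridze together with the observation that $\alpha*\beta$ and the inverse cocycle remain continuous, and joint continuity of the operations is obtained by exhibiting them as restrictions of continuous maps on $\mathcal{C}_{k}(G,A)\times R$, with Proposition \ref{prop 2.15} supplying the continuity of $(r,\beta)\mapsto{}^{r}\beta$ exactly as you identify. Your added remarks on Hausdorffness and on where local compactness of $G$ and $R$ enters are consistent with, though more explicit than, the paper's argument.
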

\begin{proof} By Proposition 3 of \cite{7}, $Der(G,(A,\mu))$ is a group. If $(\alpha,r), (\beta,s)\in Der_{c}(G,(A,\mu))\subset Der(G,(A,\mu))$, then $(\alpha,r)\star (\beta,s)\in Der_{c}(G,(A,\mu))$ and $(\alpha,r)^{-1}=(\bar{\alpha},r^{-1}) \in Der_{c}(G,(A,\mu))$, where $\bar{\alpha}(g)=\ ^{r^{-1}}\alpha(g)^{-1}, \forall g\in G $. It is clear that $\alpha*\beta$ and $\bar{\alpha}$ are continuous. Thus, $Der_{c}(G,(A,\mu))$ is a subgroup of $Der(G,(A,\mu))$.
\par By Proposition \ref{prop 2.15}, $\mathcal{C}_{k}(G,A)$ is a topological $R$-module. Thus, it is clear that  $$\phi:(\mathcal{C}_{k}(G,A)\times R)\times (\mathcal{C}_{k}(G,A)\times R)\rightarrow \mathcal{C}_{k}(G,A)\times R$$ $$ ((f,r),(g,s))\mapsto (^{r}gf,rs)$$ and  $$\psi:\mathcal{C}_{k}(G,A)\times R\rightarrow \mathcal{C}_{k}(G,A)\times R$$ $$ (f,r)\mapsto \bar{f}=(^{r^{-1}}f^{-1},r^{-1})$$ are continuous. Obviously,  the restrictions of $\phi$ and $\psi$ to $Der_{c}(G,(A,\mu))\times Der_{c}(G,(A,\mu))$ and $Der_{c}(G,(A,\mu))$ are continuous, respectively. Consequently, $(Der_{c}(G,(A,\mu)),\star)$  is a topological group.
\end{proof}
\begin{prop} (i) Let $(A,\mu)$ be a partially crossed topological $G-R$-bimodule.
Then, $Der_{c}(G,(A,\mu))$ is a closed subspace of $Der_{c}(G,A)\times R$;
  \begin{itemize}
    \item[(ii)] Let $A$ be a topological $G$-module. Then, $Der_{c}(G,A)$ is a closed subspace of $\mathcal{C}_{k}(G,A)$.
  \end{itemize}
\label{prop 2.17}\end{prop}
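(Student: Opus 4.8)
The plan is to realize each of these subsets as the intersection of a family of preimages of an identity element under continuous maps, so that closedness drops out of the Hausdorff hypothesis on $A$ (resp. $R$).

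I would treat (ii) first. Note that for every $x\in G$ the point-evaluation $e_x:\mathcal{C}_k(G,A)\to A$, $\alpha\mapsto\alpha(x)$, is continuous: its preimage of an open set $V\subseteq A$ is the subbasic point-open set $[\{x\},V]$, and the compact-open topology is finer than the point-open topology (singletons are compact). Also, for fixed $g\in G$ the map $A\to A$, $a\mapsto{}^{g}a$, is continuous, being the restriction of the continuous action $G\times A\to A$ to $\{g\}\times A$. Hence, for each pair $(g,h)\in G\times G$, the map
\[
\Phi_{g,h}:\mathcal{C}_k(G,A)\to A,\qquad \alpha\mapsto \alpha(gh)^{-1}\,\alpha(g)\,{}^{g}\alpha(h),
\]
is continuous, as it is assembled from $e_{gh}$, $e_g$, $e_h$, the map $a\mapsto{}^{g}a$, and the multiplication and inversion of the topological group $A$. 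A continuous function $\alpha:G\to A$ is a crossed homomorphism exactly when $\Phi_{g,h}(\alpha)=1_A$ for all $g,h$, so $Der_c(G,A)=\bigcap_{(g,h)\in G\times G}\Phi_{g,h}^{-1}(\{1_A\})$; since $A$ is Hausdorff, $\{1_A\}$ is closed, each preimage is closed, and therefore so is the intersection.

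For (i) I would run the same argument inside $Der_c(G,A)\times R$, where $Der_c(G,A)$ carries the subspace topology from $\mathcal{C}_k(G,A)$, as fixed in the paper. For each $g\in G$ put
\[
\Psi_g:Der_c(G,A)\times R\to R,\qquad (\alpha,r)\mapsto \mu(\alpha(g))\,(r^{g}r^{-1})^{-1}.
\]
This map is continuous: the two coordinate projections are continuous, $\alpha\mapsto\alpha(g)$ is the restriction of $e_g$ above, $\mu$ is a continuous homomorphism, $r\mapsto r^{g}$ is continuous because $G$ acts continuously on $R$, and multiplication and inversion in $R$ are continuous. By Definition \ref{def 2.12}, a pair $(\alpha,r)$ with $\alpha\in Der_c(G,A)$ lies in $Der_c(G,(A,\mu))$ precisely when $\mu\alpha(g)=r^{g}r^{-1}$ for every $g\in G$, i.e. when $\Psi_g(\alpha,r)=1_R$ for every $g$. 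Thus $Der_c(G,(A,\mu))=\bigcap_{g\in G}\Psi_g^{-1}(\{1_R\})$, which is closed in $Der_c(G,A)\times R$ because $R$ is Hausdorff.

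I do not anticipate a genuine obstacle. The only points that deserve a moment's care are the continuity of point-evaluations on $\mathcal{C}_k(G,A)$ — which needs no local-compactness assumption on $G$, since singletons are always compact — and the continuity, for fixed $g$, of the maps $a\mapsto{}^{g}a$ and $r\mapsto r^{g}$, both immediate from continuity of the respective actions. Note that, unlike Theorem \ref{thm 2.16}, this proposition requires no local compactness of $G$ or $R$, and the partially-crossed hypothesis enters only to make $Der_c(G,(A,\mu))$ well-defined in the sense of Definition \ref{def 2.12}.
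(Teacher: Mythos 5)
Your proposal is correct and follows essentially the same route as the paper: both realize $Der_c(G,A)$ and $Der_c(G,(A,\mu))$ as intersections of preimages of the identity under continuous maps built from point-evaluations, the actions, $\mu$, and the group operations, and conclude by Hausdorffness of $A$ and $R$. The only cosmetic difference is that the paper defines its map $\phi_g$ on all of $\mathcal{C}_k(G,A)\times R$ and then intersects with $Der_c(G,A)\times R$, whereas you restrict to $Der_c(G,A)\times R$ from the start.
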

\begin{proof} (i).  Consider the map $$\phi_{g}: \mathcal{C}_{k}(G,A)\times R\rightarrow R, (\alpha,r)\mapsto r^{-1}\mu\alpha(g)^{g}r,$$
for $g\in G$. By 9.6 Lemma of \cite{13}, $\phi_{g}$ is continuous, for all $g\in G$. Hence, $\phi_{g}^{-1}(1)$ is closed in $\mathcal{C}_{k}(G,A)\times R$, for all $g\in G$. It is easy to see that $$Der_{c}(G,(A,\mu))=\bigcap_{g\in G}\phi_{g}^{-1}(1)\bigcap (Der_{c}(G,A)\times R).$$ Therefore, $Der_{c}(G,(A,\mu))$ is closed in $Der_{c}(G,A)\times R$.
\par (ii). By a similar argument as in (i), we consider the continuous map $$\chi_{(g,h)}: \mathcal{C}_{k}(G,A)\rightarrow A, \alpha\mapsto \alpha(gh)^{-1}\alpha(g)^{g}\alpha(h),$$  for $(g,h)\in G\times G$. Since $$Der_{c}(G,A)=\bigcap_{(g,h)\in G\times G}\chi_{(g,h)}^{-1}(1),$$ then $Der_{c}(G,A)$ is closed in $\mathcal{C}_{k}(G,A)$.
\end{proof}
 We  immediately  obtain the following two corollaries.
\begin{cor} Let $(A,\mu)$ be a partially crossed topological $G-R$-bimodule. Then, $Der_{c}(G,(A,\mu))$ is a closed subspace of $\mathcal{C}_{k}(G,A)\times R$.
\label{cor 2.18}\end{cor}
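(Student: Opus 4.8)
The plan is to deduce the statement from Proposition \ref{prop 2.17} by the transitivity of the closedness relation for subspaces. First I would invoke Proposition \ref{prop 2.17}(ii) to conclude that $Der_{c}(G,A)$ is closed in $\mathcal{C}_{k}(G,A)$. Taking the product with $R$, it follows that $Der_{c}(G,A)\times R$ is closed in $\mathcal{C}_{k}(G,A)\times R$: indeed, its complement is $(\mathcal{C}_{k}(G,A)\setminus Der_{c}(G,A))\times R$, which is open in the product topology since $\mathcal{C}_{k}(G,A)\setminus Der_{c}(G,A)$ is open in $\mathcal{C}_{k}(G,A)$.

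Next I would apply Proposition \ref{prop 2.17}(i), which gives that $Der_{c}(G,(A,\mu))$ is closed in $Der_{c}(G,A)\times R$. Since a closed subset of a closed subspace of a topological space is itself closed in the ambient space, combining the two facts yields that $Der_{c}(G,(A,\mu))$ is closed in $\mathcal{C}_{k}(G,A)\times R$, as required.

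There is no real obstacle here: the corollary is a purely formal consequence of Proposition \ref{prop 2.17}, the only mild point being to check that $Der_{c}(G,A)\times R$ is closed in $\mathcal{C}_{k}(G,A)\times R$, which is immediate from the description of its complement above. One could alternatively give a direct proof by writing $Der_{c}(G,(A,\mu))$ as the intersection $\bigcap_{g\in G}\phi_{g}^{-1}(1)\cap\bigcap_{(g,h)\in G\times G}\big(\chi_{(g,h)}^{-1}(1)\times R\big)$ of preimages of $\{1\}$ under the continuous maps $\phi_{g}$ and $\chi_{(g,h)}$ of Proposition \ref{prop 2.17}, but the two-step argument above is shorter and reuses exactly what has already been proved.
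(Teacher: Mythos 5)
Your argument is correct and is exactly the intended one: the paper derives this corollary immediately from Proposition \ref{prop 2.17} by combining parts (i) and (ii) via transitivity of closedness, just as you do. No gaps.
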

\begin{cor} Let $G$ be a topological group and $A$ an abelian topological group. Then, $Hom_{c}(G,A)$ is a closed subgroup of $\mathcal{C}_{k}(G,A)$.
\label {cor 2.19}\end{cor}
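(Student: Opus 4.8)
The plan is to deduce this directly from Proposition \ref{prop 2.17}(ii) by viewing $A$ as a trivial topological $G$-module. First I would equip $A$ with the trivial $G$-action, $^{g}a=a$ for all $g\in G$, $a\in A$; since this action is (trivially) continuous, $A$ becomes a topological $G$-module and Proposition \ref{prop 2.17}(ii) applies. The key observation is that for the trivial action a map $\alpha:G\to A$ is a crossed homomorphism precisely when $\alpha(gh)=\alpha(g)\,{}^{g}\alpha(h)=\alpha(g)\alpha(h)$, i.e.\ precisely when $\alpha$ is an ordinary group homomorphism; hence $Der_{c}(G,A)=Hom_{c}(G,A)$ as subsets of $\mathcal{C}(G,A)$, and the induced subspace topologies from $\mathcal{C}_{k}(G,A)$ coincide.

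Next I would invoke Proposition \ref{prop 2.17}(ii) to conclude that $Der_{c}(G,A)=Hom_{c}(G,A)$ is a closed subspace of $\mathcal{C}_{k}(G,A)$. It remains to check that it is in fact a subgroup. Here I would use that $A$ is abelian: if $\alpha,\beta\in Hom_{c}(G,A)$ then the pointwise product $\alpha.\beta$ defined by $(2.2)$ satisfies $(\alpha.\beta)(gh)=\alpha(gh)\beta(gh)=\alpha(g)\alpha(h)\beta(g)\beta(h)=\alpha(g)\beta(g)\alpha(h)\beta(h)=(\alpha.\beta)(g)(\alpha.\beta)(h)$ by commutativity of $A$, and likewise $g\mapsto \alpha(g)^{-1}$ is a continuous homomorphism; thus $Hom_{c}(G,A)$ is closed under the group operations of $\mathcal{C}(G,A)$. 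Since $G$, being a Hausdorff topological group, is Tychonoff and the compact subsets of $G$ form a hereditarily closed, compact network, Lemma \ref{lem 2.14} gives that $\mathcal{C}_{k}(G,A)$ is a topological group, so $Hom_{c}(G,A)$ is a (closed) topological subgroup of it.

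There is essentially no hard step here; the only thing to be careful about is the identification in the first paragraph — that "crossed homomorphism with respect to the trivial action" is literally the same notion as "homomorphism", so that Proposition \ref{prop 2.17}(ii) is genuinely applicable — together with the explicit use of commutativity of $A$ to get closure under the pointwise product. Everything else is routine bookkeeping about subspace topologies and topological subgroups.
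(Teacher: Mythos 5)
Your argument is correct and is exactly what the paper intends: the corollary is stated as an immediate consequence of Proposition \ref{prop 2.17}(ii), obtained by giving $A$ the trivial $G$-action so that $Der_{c}(G,A)=Hom_{c}(G,A)$, with commutativity of $A$ supplying the subgroup property. Your write-up simply makes explicit the details the paper leaves unstated.
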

Suppose that   $(A,\mu)$ is a partially crossed topological $G-R$-bimodule. There is an action of $G$ on $Der(G,(A,\mu))$ defined by
$$^{g}(\alpha,r)=(\tilde{\alpha},^{g}r), g\in G, r \in R \eqno{(2.4)}$$
with $\tilde{\alpha}(h)=\ ^{g}\alpha(^{g^{-1}}h), h\in G$ \cite{7}.
\par Note that if $(\alpha,r)\in Der_{c}(G,(A,\mu))$, then $^{g}(\alpha,r)\in Der_{c}(G,(A,\mu)), \forall g\in G$, since $\tilde{\alpha}$ is continuous. This shows that $Der_{c}(G,(A,\mu))$ is a $G$-submodule of $Der(G,(A,\mu))$.
\begin{lem} Let $G$ and $R$ be locally compact groups and $(A,\mu)$ a partially crossed topological module. Then by (2.4), $Der_{c}(G,(A,\mu))$ is a topological $G$-module.
\label{lem 2.20}\end{lem}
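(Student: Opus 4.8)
The plan is to show that the action map $G \times Der_c(G,(A,\mu)) \to Der_c(G,(A,\mu))$, $(g,(\alpha,r)) \mapsto \ ^{g}(\alpha,r) = (\tilde\alpha, {}^{g}r)$ is continuous, since we already know (from the discussion preceding the statement) that $Der_c(G,(A,\mu))$ is a $G$-submodule of $Der(G,(A,\mu))$ — i.e.\ the algebraic action is well-defined and lands in the continuous part — and that $Der_c(G,(A,\mu))$ carries the subspace topology inherited from $\mathcal{C}_k(G,A)\times R$ and is a topological group by Theorem \ref{thm 2.16}. So the only thing left is continuity of the action, and for that it suffices to produce a continuous map $G \times (\mathcal{C}_k(G,A)\times R) \to \mathcal{C}_k(G,A)\times R$ whose restriction to $G \times Der_c(G,(A,\mu))$ is the action; continuity of the restriction is then automatic from the subspace topology on both sides.

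The second coordinate is easy: $(g,r)\mapsto {}^{g}r$ is continuous because $A$ — sorry, because $R$ — is a topological $G$-module by hypothesis. The work is in the first coordinate: I must show that the map $T\colon G\times \mathcal{C}_k(G,A)\to \mathcal{C}_k(G,A)$ sending $(g,f)$ to the function $h\mapsto {}^{g}f({}^{g^{-1}}h)$ is continuous. Here I follow the same pattern used in the proof of Proposition \ref{prop 2.15}. Since $G$ is locally compact Hausdorff, the evaluation map $e\colon G\times \mathcal{C}_k(G,A)\to A$, $(h,f)\mapsto f(h)$, is continuous (Theorem 5.3 of \cite{11}). Combining $e$ with the continuous inversion on $G$, the continuous $G$-action on $G$ (conjugation, or rather the action of $G$ on itself — here we just need the given continuous action of $G$ on $G$ by conjugation to form $^{g^{-1}}h$, which is continuous as $G$ acts on itself continuously by conjugation) and the continuous $G$-action on $A$, one builds a continuous map $G\times\mathcal{C}_k(G,A)\times G\to A$, $(g,f,h)\mapsto {}^{g}f({}^{g^{-1}}h)$. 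Because $G$ is locally compact Hausdorff, Corollary 5.4 of \cite{11} converts this into a continuous map $G\times\mathcal{C}_k(G,A)\to \mathcal{C}_k(G,A)$, which is exactly $T$. Then $(g,(f,r))\mapsto (T(g,f), {}^{g}r)$ is continuous on $G\times(\mathcal{C}_k(G,A)\times R)$, and restricting to $G\times Der_c(G,(A,\mu))$ gives the continuity of (2.4).

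Finally I should note that we already remarked $^{g}(\alpha,r)\in Der_c(G,(A,\mu))$, so the restricted map does take values in $Der_c(G,(A,\mu))$; together with the group structure from Theorem \ref{thm 2.16} and the fact that (2.4) defines an (algebraic) $G$-action by \cite{7}, this makes $Der_c(G,(A,\mu))$ a topological $G$-module.

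The main obstacle is the bookkeeping in assembling the continuous map $(g,f,h)\mapsto {}^{g}f({}^{g^{-1}}h)$ and then correctly invoking the exponential-law corollary (Corollary 5.4 of \cite{11}) in the presence of the two variables $g$ and $f$ — one has to be careful that local compactness of $G$ is used in exactly the spot where currying is applied, just as in Proposition \ref{prop 2.15}; there are no genuinely new ideas beyond that proposition, only a slightly more elaborate composition.
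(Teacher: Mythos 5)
Your argument is correct and is essentially identical to the paper's own proof: both build the continuous map $(g,h,\alpha)\mapsto {}^{g}\alpha({}^{g^{-1}}h)$ from the evaluation map (using local compactness of $G$), curry it via the exponential law as in Proposition \ref{prop 2.15} to get continuity of $(g,\alpha)\mapsto\tilde\alpha$ on $G\times\mathcal{C}_k(G,A)$, pair this with the continuous action $(g,r)\mapsto{}^{g}r$, and restrict to $G\times Der_c(G,(A,\mu))$. No substantive differences.
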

\begin{proof} Since $G$ is locally compact and Hausdorff, then the evaluation map $e:G\times \mathcal{C}_{k}(G,A)\rightarrow A$, $(g,\alpha)\mapsto \alpha(g)$ is continuous. Thus, the map $$\Phi:G\times G\times \mathcal{C}_{k}(G,A)\rightarrow A, (g,h,\alpha)\mapsto \ ^{g}\alpha(^{g^{-1}}h)$$ is continuous. By a similar  argument as in the proof of Proposition \ref{prop 2.15},  the map $G\times \mathcal{C}_{k}(G,A)\rightarrow \mathcal{C}_{k}(G,A)$, $(g,\alpha)\mapsto \tilde{\alpha}$ is continuous, where $\tilde{\alpha}(h)=\ ^{g}\alpha(^{g^{-1}}h)$, $ h\in G$. Hence, $$(G\times \mathcal{C}_{k}(G,A))\times R\rightarrow \mathcal{C}_{k}(G,A)\times R$$ $$((g,\alpha),r)\mapsto (\tilde{\alpha},^{g}r)$$ is continuous. Therefore, by restriction of this map to $G\times Der_{c}(G,(A,\mu))$ we get the continuous map $$G\times Der_{c}(G,(A,\mu))\rightarrow Der_{c}(G,(A,\mu))$$ $$ ((g,\alpha),r)\mapsto (\tilde{\alpha},^{g}r)$$ and this completes the proof.
\end{proof}
Let $(A,\mu)$ be a partially crossed topological $G-R$-bimodule. If  $G$  is a topological $R$-module, and the compatibility condition
\begin{center}
$^{(^{r}g)}a=\ ^{rgr^{-1}}a$ and $^{(^{r}g)}s=\ ^{rgr^{-1}}s$; $\forall r,s\in R, g\in G, a\in A$,
\end{center}
holds, then $Der(G,(A,\mu))$ is an $R$-module  via
$$^{r}(\alpha,s)=(\tilde{\alpha},^{r}s) \eqno{(2.5)}$$ where $\tilde{\alpha}(g)=\ ^{r}\alpha(^{r^{-1}}g), g\in G$ \cite{7}.
\par It is easy to see that $Der_{c}(G,(A,\mu))$ is an $R$-submodule of $Der(G,(A,\mu))$.
\begin{lem}  Let $G$ and $R$ be locally compact groups and $(A,\mu)$ a partially crossed topological $G-R$-bimodule. Then by (2.5), $Der_{c}(G,(A,\mu))$  is a topological $R$-module.
\label{lem 2.21}\end{lem}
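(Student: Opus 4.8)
The plan is to mimic the proof of Lemma \ref{lem 2.20}, with $R$ now playing the role that $G$ played there: in (2.5) it is $R$ (rather than $G$) that reparametrises a derivation $\alpha$ into $\tilde{\alpha}(g)={}^{r}\alpha({}^{r^{-1}}g)$ and acts by conjugation on the second coordinate. Since $Der_{c}(G,(A,\mu))$ is already a topological group by Theorem \ref{thm 2.16}, and since it has been observed to be an $R$-submodule of $Der(G,(A,\mu))$, it suffices to show that the map
$$R\times Der_{c}(G,(A,\mu))\longrightarrow Der_{c}(G,(A,\mu)),\qquad (r,(\alpha,s))\longmapsto(\tilde{\alpha},{}^{r}s)$$
is continuous, and for this it is enough to exhibit a continuous map $R\times\bigl(\mathcal{C}_{k}(G,A)\times R\bigr)\to\mathcal{C}_{k}(G,A)\times R$ restricting to it.

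First I would use that $G$ is locally compact Hausdorff, so that the evaluation $e:G\times\mathcal{C}_{k}(G,A)\to A$, $(g,\alpha)\mapsto\alpha(g)$, is continuous. Composing with the action of $R$ on $G$ (continuous, since $G$ is a topological $R$-module) and the continuous action of $R$ on $A$, the map
$$\Phi:R\times G\times\mathcal{C}_{k}(G,A)\to A,\qquad (r,g,\alpha)\longmapsto{}^{r}\alpha({}^{r^{-1}}g)={}^{r}\bigl(e({}^{r^{-1}}g,\alpha)\bigr)$$
is continuous. Then, by the same currying argument used in the proof of Proposition \ref{prop 2.15} — passing from a continuous map on a product to the induced continuous map into a compact-open function space (Corollary 5.4 of \cite{11}) and invoking the exponential-law homeomorphism (Corollary 2.5.7 of \cite{9}), which is legitimate here precisely because both $G$ and $R$ are locally compact — one obtains a continuous map $R\times\mathcal{C}_{k}(G,A)\to\mathcal{C}_{k}(G,A)$, $(r,\alpha)\mapsto\tilde{\alpha}$.

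Finally, since $R$ acts on itself by conjugation, $R\times R\to R$, $(r,s)\mapsto{}^{r}s=rsr^{-1}$, is continuous, so the coordinatewise map $(r,(\alpha,s))\mapsto(\tilde{\alpha},{}^{r}s)$ from $R\times(\mathcal{C}_{k}(G,A)\times R)$ to $\mathcal{C}_{k}(G,A)\times R$ is continuous. Restricting it to $R\times Der_{c}(G,(A,\mu))$, and noting that the image stays inside $Der_{c}(G,(A,\mu))$ because the latter is an $R$-submodule, the subspace topology makes this restriction continuous; hence $Der_{c}(G,(A,\mu))$ is a topological $R$-module. The only step requiring care is the currying argument, i.e. checking that ``passing to function spaces'' is continuous in both the $G$- and the $R$-variable; this is exactly where the local compactness of $G$ and of $R$ is used, and it is carried out verbatim as in Proposition \ref{prop 2.15} and Lemma \ref{lem 2.20}. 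Everything else is routine.
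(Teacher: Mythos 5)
Your proposal is correct and follows exactly the route the paper intends: the paper's own proof of Lemma \ref{lem 2.21} consists of the single remark that it is ``a similar argument as in Lemma \ref{lem 2.20}'', and your write-up is precisely that argument with $R$ in place of $G$ (evaluation map, currying via Corollary 5.4 of \cite{11} and the exponential law, then restriction to the $R$-submodule $Der_{c}(G,(A,\mu))$). You also correctly flag that the action (2.5) presupposes the hypotheses stated just before the lemma ($G$ a topological $R$-module with the compatibility conditions), which the paper leaves implicit.
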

\begin{proof} This  can be  proved by a similar argument as in  Lemma \ref{lem 2.20}.
\end{proof}
\begin{defn} Let $G$ and $R$ be topological groups acting continuously on each other. These actions are said to be compatible if
\begin{center}
$^{(^{r}g)}s=\ ^{rgr^{-1}}s$ and $^{(^{g}r)}h=\ ^{grg^{-1}}h$; $\forall r,s\in R, g,h\in G.$
\end{center}
 Also, it is said that the topological groups $G$ and $R$ act (continuously) on a topological group $A$ compatibly if
\begin{center}
$^{(^{r}g)}a=\ ^{rgr^{-1}}a$ and $^{(^{g}r)}a=\ ^{grg^{-1}}a$; $\forall r\in R, g\in G, a\in A$.
\end{center}
\label{def 2.22}\end{defn}
\begin{prop} Let $G$ and $R$ be locally compact groups and $(A,\mu)$ a partially crossed topological $G-R$-bimodule. Let the topological groups $G$ and $R$ act continuously on each other and on $A$ compatibly. Then, $(Der_{c}(G,(A,\mu)),\gamma)$ is a precrossed topological $G-R$-bimodule, where $\gamma:Der_{c}(G,(A,\mu))\rightarrow R$, $(\alpha,r)\mapsto r$.
\label{prop 2.23}\end{prop}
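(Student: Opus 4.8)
The plan is to verify, clause by clause, the requirements of Definitions \ref{def 2.1} and \ref{def 2.4} for the pair $(Der_{c}(G,(A,\mu)),\gamma)$; most of this is assembly of facts already established, and the only genuine computation is the compatibility condition.

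First I would collect the ingredients. By Theorem \ref{thm 2.16}, $Der_{c}(G,(A,\mu))$ is a topological group under $\star$; by Lemma \ref{lem 2.21} it is a topological $R$-module under (2.5) — its hypotheses, that $G$ is a topological $R$-module and that $G$, $R$ act on each other and on $A$ compatibly, are exactly what is assumed here; by Lemma \ref{lem 2.20} it is a topological $G$-module under (2.4); and $G$ acts continuously on $R$ by hypothesis. This yields clause (1) of Definition \ref{def 2.4}. Next, $\gamma$ is the restriction to $Der_{c}(G,(A,\mu))$ of the continuous projection $\mathcal{C}_{k}(G,A)\times R\to R$, hence continuous, while $(\alpha,r)\star(\beta,s)=(\alpha*\beta,rs)$ shows $\gamma$ is a homomorphism; moreover (2.4) and (2.5) give $\gamma(^{g}(\alpha,r))=\ ^{g}r=\ ^{g}\gamma(\alpha,r)$ and $\gamma(^{r}(\alpha,s))=\ ^{r}s=\ ^{r}\gamma(\alpha,s)$. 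So $\gamma$ is a continuous $G$-homomorphism satisfying the equivariance required of a precrossed topological $R$-module, which disposes of clause (2) together with the precrossed-$R$-module requirement.

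It then remains to check clause (3), the compatibility condition $^{(^{g}r)}(\alpha,s)=\ ^{grg^{-1}}(\alpha,s)$, where by the standing convention the right-hand side denotes $^{g}(^{r}(^{g^{-1}}(\alpha,s)))$. This is a purely algebraic identity; it already holds in $Der(G,(A,\mu))$ — by \cite{7}, or by a direct unwinding — and so passes to the submodule $Der_{c}(G,(A,\mu))$. Carrying out the unwinding via (2.4) and (2.5) one finds that the $R$-coordinates of the two sides agree automatically because $G$ acts on $R$ by automorphisms, so $^{g}(^{r}(^{g^{-1}}s))=(^{g}r)\,s\,(^{g}r)^{-1}=\ ^{(^{g}r)}s$; and the crossed-homomorphism coordinate of the left-hand side, $x\mapsto\ ^{(^{g}r)}\alpha(^{(^{g}r)^{-1}}x)$, agrees with that of the right-hand side, $x\mapsto\ ^{g}(^{r}(^{g^{-1}}\alpha(^{g}(^{r^{-1}}(^{g^{-1}}x)))))$, after applying the compatibility $^{(^{g}r)}a=\ ^{grg^{-1}}a$ on $A$ to the outer action and the compatibility $^{(^{g}(r^{-1}))}h=\ ^{gr^{-1}g^{-1}}h$ on $G$ — together with $(^{g}r)^{-1}=\ ^{g}(r^{-1})$ — to the argument of $\alpha$. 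This establishes clause (3) and completes the proof. I expect the only real difficulty to be organizational: keeping the several interlocking actions ($G$ on $R$, $G$ and $R$ on each other, $G$ and $R$ on $A$, and the self-conjugations of $G$ and $R$) straight while expanding the formulas, since no topological input beyond Lemmas \ref{lem 2.20} and \ref{lem 2.21} is required.
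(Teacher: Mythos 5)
Your proof is correct and takes essentially the same route as the paper: it invokes Lemmas \ref{lem 2.20} and \ref{lem 2.21} for the continuity of the $G$- and $R$-actions, gets continuity of $\gamma$ from the projection $\mathcal{C}_{k}(G,A)\times R\to R$, and verifies the compatibility identity $^{(^{g}r)}(\alpha,s)=\ ^{grg^{-1}}(\alpha,s)$. The only difference is that the paper simply cites Proposition 5 of Inassaridze for that identity, whereas you unwind it explicitly (correctly).
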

\begin{proof} Since $G$ and $R$ are locally compact groups, then by Lemma \ref{lem 2.20} and Lemma \ref{lem 2.21}, $G$ and $R$ act continuously on  $Der_{c}(G,(A,\mu))$. The map $\gamma$ is continuous, since $\pi_{2}:\mathcal{C}_{k}(G,A)\times R\rightarrow R, (\alpha,r)\mapsto r$ is continuous. Also, $\gamma$ is a $G$-homomorphism and an $R$-homomorphism. Since $^{^{g}r}(\alpha,s)=\ ^{grg^{-1}}(\alpha,s)$ for all $g\in G, r\in R, (\alpha,s)\in Der_{c}(G,(A,\mu))$ (Proposition 5 of \cite{7}), we conclude that $(Der_{c}(G,(A,\mu)),\gamma)$ is  a precrossed topological $G-R$-bimodule.
\end{proof}

\section{The first non-abelian cohomology of a topological group as a topological space}\label{section 3}

In this section we define the first non-abelian cohomology $H^{1}(G,(A,\mu))$ of $G$   with coefficients in a partially crossed topological $G-R$-bimodule $(A,\mu)$. We will introduce a topological structure on $H^{1}(G,(A,\mu))$. It will be shown that under what conditions $H^{1}(G,(A,\mu))$ is a topological group. As a result, $H^{1}(G,(A,\mu))$ is a topological group for every partially crossed topological $G$-module. In addition, we verify some topological properties of $H^{1}(G,(A,\mu))$.
 \par Let $R$ ba a topological $G$-module, then we define $$H^{0}(G,R)=\{r|^{g}r=r,\forall g\in G\}.$$
\par Let $(A,\mu)$ be a partially crossed topological $G-R$-bimodule. H. Inassaridze \cite{6} introduced an equivalence relation on the group $Der(G,(A,\mu))$ as follows:
\begin{center}
$(\alpha,r)\sim(\beta,s)\Leftrightarrow$ $(\exists \ a\in A \wedge ( \forall g\in G\Rightarrow\beta(g)=a^{-1}\alpha(g)^{g}a))$ \\ $\wedge \ (s=\mu(a)^{-1}r$ mod $H^{0}(G,R))$
\end{center}
\par Let $\sim'$ be the restriction of $\sim$ to $Der_{c}(G,(A,\mu))$. Therefore, $\sim'$ is an equivalence relation. In other word, $(\alpha,r)\sim'(\beta,s)$ if and only if $(\alpha,r)\sim(\beta,s)$, whenever $(\alpha,r), (\beta,s)\in Der_{c}(G,(A,\mu))$.
\begin{defn} Let $(A,\mu)$ be a partially crossed topological $G-R$-bimodule. The quotient set  $Der_{c}(G,(A,\mu))/\sim'$  will be called the first cohomology  of $G$ with the coefficients in $(A,\mu)$ and is denoted by $H^{1}(G,(A,\mu))$. (In this definition, the groups $G$, $R$ and $A$ are not necessarily Hausdorff.)
\label{def 3.1}\end{defn}
\begin{thm} Let $G$ and $R$ be  locally compact groups and $(A,\mu)$ a partially crossed topological $G-R$-bimodule satisfying
the following conditions
\begin{itemize}
  \item[(i)] $H^{0}(G,R)$ is a normal subgroup of $R$;
  \item[(ii)] for every $c\in H^{0}(G,R)$ and $(\alpha,r)\in Der_{c}(G,(A,\mu))$, there exists
$a \in A$ such that $\mu(a)=1$ and $^{c}\alpha(g)=a^{-1}\alpha(g)^{g}a$, $\forall g\in G$.
\end{itemize}
Then, $Der_{c}(G,(A,\mu))$ induces  a topological group structure on $H^{1}(G,(A,\mu))$.
\label{thm 3.2}\end{thm}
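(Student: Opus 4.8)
The plan is to realize $H^{1}(G,(A,\mu))$ as the quotient of the topological group $D:=Der_{c}(G,(A,\mu))$ by a normal subgroup, and then invoke the standard fact that a topological group modulo a normal subgroup (with the quotient topology) is again a topological group. By Theorem \ref{thm 2.16}, $D$ is a topological group under $\star$, so the whole content is to identify the equivalence classes of $\sim'$ with cosets of a suitable normal subgroup $N\trianglelefteq D$ and to check that the quotient topology agrees with the topology we want on $H^{1}$.

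First I would unwind the definition of $\sim'$. The relation $(\alpha,r)\sim'(\beta,s)$ says there is $a\in A$ with $\beta(g)=a^{-1}\alpha(g)\,^{g}a$ for all $g$ and $s=\mu(a)^{-1}r \bmod H^{0}(G,R)$. I claim this is exactly the orbit equivalence for left translation in $D$ by the subgroup
$$
N=\bigl\{(\delta_{a},\mu(a)^{-1}c)\ :\ a\in A,\ c\in H^{0}(G,R),\ \mu(a)^{-1}c=\mu(a')^{-1}c' \Rightarrow \cdots\bigr\},
$$
more precisely $N=\{(\delta_a, \mu(a)^{-1}c): a\in A,\ c\in H^0(G,R)\}$ where $\delta_a(g)=a^{-1}\,^{g}a$ is the principal crossed homomorphism determined by $a$; continuity of $\delta_a$ and the identity $\mu\delta_a(g)=\mu(a)^{-1}\,^{g}\mu(a) = \mu(a)^{-1}c\,^{g}(\mu(a)^{-1}c)^{-1}$ (using $\mu$ a $G$-homomorphism and $c\in H^0$) show $(\delta_a,\mu(a)^{-1}c)\in D$. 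One must verify: (1) $N$ is a subgroup of $D$ — here the group law $\star$ on pairs, the fact that $a\mapsto\delta_a$ interacts correctly with the twisted product, and hypothesis (i) that $H^{0}(G,R)$ is normal in $R$ are all needed; (2) $N$ is normal in $D$ — for this one conjugates $(\delta_a,\mu(a)^{-1}c)$ by an arbitrary $(\alpha,r)\in D$ and checks the result is again of the same form, which is where Inassaridze's algebra from \cite{6},\cite{7} (that $\sim$ is a congruence-like relation compatible with the group structure) gets used, together with hypothesis (ii); (3) the left cosets $(\alpha,r)\star N$ coincide with the $\sim'$-classes. Step (2), normality, together with checking that the naive coset description really matches $\sim'$ (the "mod $H^{0}(G,R)$" clause and the requirement $\mu(a)=1$ in hypothesis (ii) must be reconciled), is the part that needs the two standing hypotheses and is the main obstacle; everything else is bookkeeping with the explicit formulas.

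Once $H^{1}(G,(A,\mu))=D/N$ is established as a set with $N\trianglelefteq D$, I would put the quotient topology on it — which, since $H^{1}$ was defined as the quotient space $Der_{c}(G,(A,\mu))/\sim'$, is by construction the topology carried over from $D$ — and conclude by the classical result that for a topological group $D$ and a normal subgroup $N$, $D/N$ with the quotient topology is a topological group (the quotient map is open, so multiplication and inversion descend to continuous maps; Hausdorffness is not claimed here, consistent with the parenthetical remark in Definition \ref{def 3.1}). Thus $Der_{c}(G,(A,\mu))$ induces a topological group structure on $H^{1}(G,(A,\mu))$, as asserted. The only subtlety I would flag in writing this up is making sure the set-theoretic identification of $\sim'$-classes with $N$-cosets is airtight, since the definition of $\sim'$ mixes an exact condition on $\beta$ with a condition on $s$ only modulo $H^{0}(G,R)$, and it is precisely hypotheses (i) and (ii) that guarantee this mixed relation is the coset relation of a genuine normal subgroup rather than something coarser.
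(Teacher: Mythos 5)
Your proposal is correct and follows essentially the same route as the paper: both realize $H^{1}(G,(A,\mu))$ as the quotient of the topological group $Der_{c}(G,(A,\mu))$ (Theorem \ref{thm 2.16}) by the normal subgroup consisting of the $\sim'$-class of the identity, and then invoke the standard fact that a topological group modulo a normal subgroup is a topological group; the paper simply outsources the algebraic verification that hypotheses (i) and (ii) make the $\sim$-classes into cosets of a normal subgroup to Inassaridze's Theorem 2.1 of \cite{6}, whereas you sketch that verification explicitly via the subgroup $\{(Inn(a),\mu(a)c):a\in A,\ c\in H^{0}(G,R)\}$.
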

\begin{proof}  By Theorem 2.1 of \cite{6},  the group $Der(G,(A,\mu))$ induces the following action on $Der(G,(A,\mu))/\sim$
 $$[(\alpha,r)][(\beta,s)]=[(^{r}\beta\alpha,rs)].$$
Thus, $N=\{(\alpha,r)|(\alpha,r)\in Der(G,(A,\mu)), (\alpha,r)\sim(\mathbf{1},1)\}$ is a normal subgroup of $Der(G,(A,\mu))$. Therefore, $N'=\{(\alpha,r)|(\alpha,r)\in Der_{c}(G,(A,\mu)), (\alpha,r)\sim(\mathbf{1},1)\}$ is a normal subgroup of $Der_{c}(G,(A,\mu))$. By Theorem \ref{thm 2.16}, $Der_{c}(G,(A,\mu))$ is a topological group. Obviously,  $H^{1}(G,(A,\mu))=Der_{c}(G,(A,\mu))/N'$. Therefore, $H^{1}(G,(A,\mu))$ is a topological group.
\end{proof}
\begin{notice} (i) Note that Hausdorffness of $A$ is not needed in Theorem \ref{thm 3.2}.
\begin{itemize}
\item[(ii)] Let $A$ be a topological $G$-module. The first cohomology, $H^{1}(G,A)$, of $G$ with coefficients in $A$  is defined as in \cite{12}. Thus, the compact-open topology on $Der_{c}(G,A)$ induces a quotient topology on $H^{1}(G,A)$. From now on, we consider $H^{1}(G,A)$ with this topology. Define $Inn(G,A)=\{Inn(a)|a\in A\}$, where for all $a\in A, g\in G$, $Inn(a)(g)=a^{g}a^{-1}$. If $A$ is abelian, then by Remark 2.4. (i) of \cite{12},  $Inn(G,A)$ is a normal subgroup of $Der_{c}(G,A)$ and $H^{1}(G,A)=Der_{c}(G,A)/Inn(G,A)$; moreover, $H^{1}(G,A)$ is a topological group, and it is Hausdorff if and only if $Inn(G,A)$ is closed in $Der_{c}(G,A)$.
\item[(iii)] Define $Inn(G,(A,\mu))=\{(Inn(a),\mu(a)z)|a\in A, z\in H^{0}(G,R)\}$. Note that if, $H^{1}(G,(A,\mu))$ is a topological group, then $Inn(G,(A,\mu))$ is a normal subgroup of $Der_{c}(G,(A,\mu))$. Thus, by hypotheses of Theorem \ref{thm 3.2}, $Inn(G,(A,\mu))$ is a normal subgroup of $Der_{c}(G,(A,\mu))$ and $H^{1}(G,(A,\mu))=Der_{c}(G,(A,\mu))/Inn(G,(A,\mu))$.
\end{itemize}
\label{notice 3.3}\end{notice}
\par In the following, we give an example for this fact that: in general, $H^{1}(G,A)$ and $H^{1}(G,(A,\mu))$ are not necessarily Hausdorff.
\begin{example}  Let $G$ be an  abelian discrete group; let  $(\mathbb{Z},+)$  be the integer numbers group with the indiscrete topology $\tau$, (i.e., $\tau=\{\mathbb{Z},\emptyset\}$) such that $\chi:G\to Aut(\mathbb{Z})\cong\mathbb{Z}/2\mathbb{Z}$ is a nontrivial homomorphism. Equip $Aut(\mathbb{Z})$  with the compact-open topology. Then, $\chi$ induces a nontrivial continuous action of $G$ on $\mathbb{Z}$ given by $^{g}z=\chi(g)(z)$, $\forall g\in G , z\in \mathbb{Z}$.  For all $g\in G$, we have $[\{g\},\mathbb{Z}]\cap Der_{c}(G,\mathbb{Z})=Der_{c}(G,\mathbb{Z})$. Hence, the compact-open  topology on $Der_{c}(G,\mathbb{Z})$ is the indiscrete topology. Thus, $H^{1}(G,\mathbb{Z})=Der_{c}(G,\mathbb{Z})/Inn(G,\mathbb{Z})$ has the indiscrete topology.  On the other hand, discreteness of $G$ implies that $Der_{c}(G,\mathbb{Z})=Der(G,\mathbb{Z})$. Hence by Theorem 3.2 of \cite{1}, $H^{1}(G,\mathbb{Z})\cong\mathbb{Z}/2\mathbb{Z}\neq 1$. Hence, $H^{1}(G,\mathbb{Z})$ is not Hausdorff. Consequently, $Inn(G,\mathbb{Z})$ is not closed in $Der_{c}(G,\mathbb{Z})$. Now, note that $(\mathbb{Z},\mathbf{1}:\mathbb{Z}\to G)$ is a crossed $G-G$-bimodule. It is easy to see that $Inn(G,(\mathbb{Z},\mathbf{1}))=Inn(G,\mathbb{Z})\times G$. Hence $Inn(G,(\mathbb{Z},\mathbf{1}))$ is not closed in $Der_{c}(G,(\mathbb{Z},\mathbf{1}))$ and so $H^{1}(G,(\mathbb{Z},\mathbf{1}))$ is not Hausdorff.
\label{example 3.4}\end{example}
\begin{rem} Let $A$ be an abelian topological {$G$-module and  $A$ be compact} Hausdorff. Then, $H^{1}(G,A)$ is a Hausdorff topological group.
\label{rem 3.5}\end{rem}
\par Let $(A,\mu)$ be a partially crossed $G$-module. Naturally  $(A,\mu)$ is a crossed $G-G$-bimodule. Thus, we define the first cohomology of $G$ with coefficients in $(A,\mu)$ {as} the set $H^{1}(G,(A,\mu))$.
\begin{thm} Let $G$ be a locally compact group and $(A,\mu)$ a partially crossed topological $G$-module. Then, $H^{1}(G,(A,\mu))$ is a topological group. In addition, if any of the following conditions is satisfied, then $H^{1}(G,(A,\mu))$ is Hausdorff.
\begin{itemize}
   \item[(i)] $A$ is compact and $G$ has trivial center;
   \item[(ii)] $A$ is a trivial $G$-module;
   \item[(iii)] $A$ and $Z(G)$ are compact, in particular if both topological groups $A$ and $G$ are compact.
 \end{itemize}
\label{thm 3.6} \end{thm}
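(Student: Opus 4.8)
The plan is to obtain the first assertion as an instance of Theorem \ref{thm 3.2}, and to settle Hausdorffness by proving that the normal subgroup defining $H^{1}$ is closed. For the first claim, view the partially crossed topological $G$-module $(A,\mu)$ as a partially crossed topological $G-G$-bimodule in the natural way (cf. Remark \ref{rem 2.10}), so that $R=G$ acts on itself by conjugation and on $A$ through the given module structure, and then check the two hypotheses of Theorem \ref{thm 3.2}. Hypothesis (i) is immediate, since $H^{0}(G,G)=\{r\in G : grg^{-1}=r\ \text{for all}\ g\in G\}=Z(G)$ is always normal in $G$. For hypothesis (ii), fix $c\in Z(G)$ and $(\alpha,r)\in Der_{c}(G,(A,\mu))$ and take $a:=\alpha(c)$: expanding $\alpha(cg)$ and $\alpha(gc)$ by the crossed-homomorphism identity and using $cg=gc$ gives $\alpha(c)\,{}^{c}\alpha(g)=\alpha(g)\,{}^{g}\alpha(c)$, i.e. ${}^{c}\alpha(g)=a^{-1}\alpha(g)\,{}^{g}a$ for every $g\in G$, while $\mu(a)=\mu\alpha(c)=r\,{}^{c}r^{-1}=rcr^{-1}c^{-1}=1$ because $c$ is central. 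Thus Theorem \ref{thm 3.2} applies, so $H^{1}(G,(A,\mu))$ is a topological group and, by Notice \ref{notice 3.3}(iii), equals $Der_{c}(G,(A,\mu))/Inn(G,(A,\mu))$ as topological groups.

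For Hausdorffness, recall that a quotient of a topological group by a normal subgroup is Hausdorff precisely when the subgroup is closed, so it suffices to show $Inn(G,(A,\mu))$ is closed in $Der_{c}(G,(A,\mu))$. First note $Der_{c}(G,(A,\mu))$ is Hausdorff: by Corollary \ref{cor 2.18} it is a subspace of $\mathcal{C}_{k}(G,A)\times G$, which is Hausdorff because $A$ and $G$ are. The key auxiliary observation is that $\Theta\colon A\times Z(G)\to Der_{c}(G,(A,\mu))$, $(a,z)\mapsto(Inn(a),\mu(a)z)$, is continuous with image $Inn(G,(A,\mu))$: continuity of the second coordinate only needs continuity of $\mu$ and of multiplication in $G$, and continuity of the first coordinate follows since $(a,g)\mapsto a\,{}^{g}a^{-1}$ is continuous on $A\times G$ and a jointly continuous map into $A$ induces a continuous map into $\mathcal{C}_{k}(G,A)$, exactly as in the proof of Lemma \ref{lem 2.20}.

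With $\Theta$ available the three cases are short. In case (i) $Z(G)$ is trivial, so $Inn(G,(A,\mu))=\Theta(A\times\{1\})$ is the continuous image of a compact space, hence compact, hence closed in the Hausdorff space $Der_{c}(G,(A,\mu))$; case (iii) is the same argument with the compact domain $A\times Z(G)$, and when $A$ and $G$ are both compact, $Z(G)$ is a closed subgroup of a compact group and hence compact, reducing to (iii). In case (ii) the trivial $G$-action on $A$ forces $Inn(a)(g)=a\,{}^{g}a^{-1}=1$, so $Inn(a)=\mathbf 1$ for all $a$, and $\mu(a)=\mu({}^{g}a)={}^{g}\mu(a)=g\mu(a)g^{-1}$ for all $g$ gives $\mu(A)\subseteq Z(G)$, whence $Inn(G,(A,\mu))=\{\mathbf 1\}\times\mu(A)Z(G)=\{\mathbf 1\}\times Z(G)$, which is closed in $\mathcal{C}_{k}(G,A)\times G$ since $\{\mathbf 1\}$ is closed in the Hausdorff group $\mathcal{C}_{k}(G,A)$ and $Z(G)$ is closed in the Hausdorff group $G$. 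I expect the only genuinely delicate point to be hypothesis (ii) of Theorem \ref{thm 3.2} — recognizing that $a=\alpha(c)$ is the right witness and verifying $\mu(\alpha(c))=1$; once that is done, the remaining work is just assembling the identification $H^{1}=Der_{c}/Inn(G,(A,\mu))$, the equivalence of Hausdorffness with closedness of $Inn(G,(A,\mu))$, and the closedness checks (compact image, resp. collapse of $Inn$) in the three cases.
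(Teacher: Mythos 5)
Your proposal is correct and follows essentially the same route as the paper: the group structure comes from verifying the two hypotheses of Theorem \ref{thm 3.2} with the witness $a=\alpha(c)$ for $c\in Z(G)=H^{0}(G,G)$, and Hausdorffness is reduced to closedness of $Inn(G,(A,\mu))$, established via compactness of the image of the continuous map $(a,z)\mapsto(Inn(a),\mu(a)z)$ in cases (i) and (iii) and via the identification $Inn(G,(A,\mu))=\{\mathbf 1\}\times Z(G)$ in case (ii). Your write-up is in fact slightly more careful than the paper's (e.g.\ the explicit verification that $\mu(A)\subseteq Z(G)$ under the trivial action, and the correct form ${}^{c}\alpha(g)=a^{-1}\alpha(g)\,{}^{g}a$ of the conjugation identity), but the underlying argument is the same.
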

\begin{proof} Note that $H^{0}(G,G)=Z(G)$. For any $c\in Z(G)$ and $(\alpha,g)\in Der_{c}(G,(A,\mu))$,{} $\alpha(cx)=\alpha(xc)$ for all $x\in G$. Thus, $^{c}\alpha(x)=\alpha(c)^{-1}\alpha(x)\alpha(c)$, $\forall x\in G$ and $\mu(\alpha(c))=\ ^{g}cc^{-1}=1$. Since $G$ is locally compact, then by Theorem \ref{thm 3.2}, $H^{1}(G,(A,\mu))$ is a topological group.
\par (i). If $A$ is compact and $G$ has trivial {center then by the assumption} $Z(G)=1$.  So $Inn(G,(A,\mu))=\{(Inn(a),\mu(a))|a\in A\}$. It is easy to see that the map $Inn:A\to Der_{c}(G,A), a\mapsto Inn(a)$ is continuous. Thus, compactness of $A$ implies that $Inn(G,(A,\mu))$ is a compact subset of $Der_{c}(G,(A,\mu))$. Hence, $Inn(G,(A,\mu))$ is closed in $Der_{c}(G,(A,\mu))$. So $H^{1}(G,(A,\mu))$ is Hausdorff.
\par (ii). If $G$ acts trivially on $A$, then $^{g}\mu(a)=\mu(a)$, for every $g\in G$ and $a\in A$. Thus, $Inn(G,(A,\mu))=\{\mathbf{1}\}\times Z(G)$. Hence, $Inn(G,(A,\mu))$ is closed in $Der_{c}(G,(A,\mu))$.
\par (iii). Consider the continuous map $A\times Z(G)\to Der_{c}(G,(A,\mu))$, $(a,z)\mapsto (Inn(a),\mu(a)z)$. Consequently, the part (iii) is proved.
\end{proof}
\begin{lem}  Let $G$ be a locally compact group and $A$ an abelian topological group. Then, there is a natural topological isomorphism $$Hom_{c}(G,A)\simeq Hom_{c}(G/\overline{[G,G]},A).$$
\label{lem 3.7}\end{lem}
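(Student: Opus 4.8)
===

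The plan is to exhibit the topological isomorphism as induced by the quotient map $q: G \to G/\overline{[G,G]}$. First I would note that since $A$ is abelian, every continuous homomorphism $f: G \to A$ kills the commutator subgroup $[G,G]$, and by continuity it also kills its closure $\overline{[G,G]}$ (the preimage $f^{-1}(\{1\})$ is closed because $A$ is Hausdorff, and it contains $[G,G]$, hence contains $\overline{[G,G]}$). Therefore $f$ factors uniquely as $f = \bar f \circ q$ for a homomorphism $\bar f: G/\overline{[G,G]} \to A$, which is continuous by the universal property of the quotient topology. This gives a well-defined bijection $q^*: Hom_c(G/\overline{[G,G]},A) \to Hom_c(G,A)$, $h \mapsto h\circ q$, with inverse $f \mapsto \bar f$; it is clearly a group homomorphism since the group operations on both sides are pointwise multiplication in $A$.

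Next I would check that $q^*$ is continuous. A subbasic open set in $\mathcal{C}_k(G,A)$ restricted to $Hom_c(G,A)$ has the form $[C,V] = \{f : f(C)\subseteq V\}$ with $C\subseteq G$ compact and $V\subseteq A$ open. Then $(q^*)^{-1}([C,V]) = \{h : h(q(C))\subseteq V\} = [q(C),V]$, and $q(C)$ is compact in $G/\overline{[G,G]}$ since $q$ is continuous; hence this is a subbasic open set in $\mathcal{C}_k(G/\overline{[G,G]},A)$, so $q^*$ is continuous.

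The main obstacle is continuity of the inverse map $f \mapsto \bar f$, i.e. showing $q^*$ is open (equivalently, a homeomorphism onto its image). A subbasic open set of $Hom_c(G/\overline{[G,G]},A)$ has the form $[K,V]$ with $K\subseteq G/\overline{[G,G]}$ compact and $V\subseteq A$ open, and I must show its image under $q^*$ is open in $Hom_c(G,A)$. The natural candidate is $[C,V]$ for some compact $C\subseteq G$ with $q(C) = K$; the point is that $q^*([K,V]) = \{f : \bar f(K)\subseteq V\} = \{f : f(q^{-1}(K))\subseteq V\}$, and one wants to replace $q^{-1}(K)$ (which need not be compact) by a compact $C$ with $q(C)=K$ without changing the condition, using that $f$ is constant on fibres of $q$. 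So the crux is a lifting lemma: since $G$ is locally compact and $q: G \to G/\overline{[G,G]}$ is an open continuous surjection, every compact subset $K$ of the quotient is the image $q(C)$ of some compact subset $C$ of $G$. I would invoke this standard fact about locally compact groups and quotients (each point of $K$ has a compact neighbourhood, pull back a finite subcover, take the union of the corresponding compact sets and intersect appropriately with $q^{-1}(K)$; more cleanly, it follows from the fact that an open surjection from a locally compact space admits compact liftings of compact sets). Granting this, $q^*([K,V]) = [C,V]\cap Hom_c(G,A)$, which is open, completing the proof that $q^*$ is a topological isomorphism; the desired isomorphism is then $(q^*)^{-1}$.
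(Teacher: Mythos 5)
Your proposal is correct and follows essentially the same route as the paper: both use the bijection $h\mapsto h\circ q$ induced by the quotient map, verify continuity directly from the compact-open subbase, and establish openness via the same key fact that compact subsets of $G/\overline{[G,G]}$ lift to compact subsets of the locally compact group $G$ (the paper cites 5.24.b of Hewitt--Ross for this). The only cosmetic difference is that the paper reduces to neighbourhoods of the identity by homogeneity, whereas you handle subbasic sets directly and rely on bijectivity to pass to finite intersections.
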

\begin{proof} Since $G$ is locally compact, then $G/\overline{[G,G]}$ is a locally compact group. Let $\pi:G\rightarrow G/\overline{[G,G]}$ be the natural epimorphism. Then, obviously  $\chi:Hom_{c}(G/\overline{[G,G]},A)\rightarrow Hom_{c}(G,A)$, $f\mapsto \pi f$  is a one to one and onto continuous homomorphism. We show that $\chi$ is an open map. It suffices to show that for every  neighborhood $\Gamma$ of $\mathbf{1}$ in $Hom_{c}(G,A)$, {} $\chi(\Gamma)$ is a neighborhood of $\mathbf{1}$ in $Hom_{c}(G/\overline{[G,G]},A)$. Since $Hom_{c}(G,A)$ is a topological group, so it is a homogeneous space.  It is clear that the network of  all compact subset of $G$ is closed under finite unions. Now, by a similar argument as in page 7 of \cite{9},  there is an open neighborhood of $\mathbf{1}$ of the form $S(C,U)$ in $\Gamma$. Note that $S(C,U)=\{f|f\in Hom_{c}(G/\overline{[G,G]},A), f(C)\subset U\}$, where $C$ is compact in $G/\overline{[G,G]}$ and $U$ is  open in $A$.  Since $G$ is locally compact, then by 5.24.b of \cite{4},  there is a compact subset $D$ of $G$ such that $\pi(D)=C$.  It is easy to see that $\chi(S(C,U))=S(D,U)\subset \chi(\Gamma)$. Therefore, $\chi$ is a topological isomorphism.
\end{proof}
Recall that a topological group $G$ \emph{has no small subgroups} (or \emph{is without small subgroups}) if there is a neighborhood of the identity that contains no nontrivial subgroup of $G$. For example if $n$ is a positive integer number, then the $n$-dimensional vector group, the $n$-dimensional tours, and general linear groups over the complex numbers are without small subgroups. It is well-known that the property of having no small subgroups is an extension property (see 6.15 Theorem of \cite{13}). A topological group $G$ is called compactly generated if there exists a compact
subset $K$  so that it generates $G$, that is $G=<K>$.
\begin{prop} (1) If $G$ is  a locally compact group and $A$ is a compact abelian group without small subgroups, then $Hom_{c}(G,A)$ is a locally compact group.
\begin{itemize}

   \item[(2)] If $G$ is a locally compact compactly generated group  and $A$ is a locally compact abelian group without small subgroups, then $Hom_{c}(G,A)$ is a locally compact group.
  \item[(3)] If $G$ is a compact group  and $A$  is an abelian group without small subgroups, then $Hom_{c}(G,A)$ is a discrete group.
  \item[(4)] If $G$ is a discrete group and $A$ is a compact group, then $Hom_{c}(G,A)$ is a compact group.
  \item[(5)] If $G$ is a finite discrete group and $A$ is a compact abelian group without small subgroups, then $Hom_{c}(G,A)$ is a finite discrete group.
  \item[(6)] Let $A$ be a topological $G$-module. If $G$ is  discrete and $A$ is compact, then $Der_{c}(G,A)$ is a compact group.

\end{itemize}
\label{prop 3.8}\end{prop}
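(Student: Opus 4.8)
The plan is to dispatch the six assertions in three clusters, using the closedness statements of Section~\ref{section 2}, the reduction in Lemma~\ref{lem 3.7}, and classical structure theory of locally compact groups. The one general remark I would invoke repeatedly is that when $G$ is discrete the compact subsets of $G$ are exactly the finite ones, so $\mathcal C_{k}(G,A)$ is simply $A^{G}$ with the product topology.

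I would first do (4) and (6): when $G$ is discrete and $A$ is compact, $A^{G}$ is compact by Tychonoff's theorem, and $Hom_{c}(G,A)$ (resp.\ $Der_{c}(G,A)$) sits inside it as a closed subspace by Corollary~\ref{cor 2.19} (resp.\ Proposition~\ref{prop 2.17}(ii)); a closed subspace of a compact space is compact, and since $Hom_{c}(G,A)$ and $Der_{c}(G,A)$ are topological groups they are compact topological groups. Next (3): since $A$ has no small subgroups, fix a neighbourhood $V$ of $1_{A}$ containing no nontrivial subgroup; as $G$ is compact, $[G,V]\cap Hom_{c}(G,A)$ is an open neighbourhood of $\mathbf 1$, and any $\phi$ in it has $\phi(G)$ a subgroup of $A$ lying in $V$, hence $\phi(G)=\{1\}$ and $\phi=\mathbf 1$; thus $\{\mathbf 1\}$ is open, so the topological group $Hom_{c}(G,A)$ is discrete. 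Then (5) is immediate: a finite discrete group is at once discrete and compact, so (3) makes $Hom_{c}(G,A)$ discrete and (4) makes it compact, and a compact discrete group is finite.

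For (1) and (2), where $A$ is abelian, I would begin with Lemma~\ref{lem 3.7} to replace $G$ by $G/\overline{[G,G]}$, which is still locally compact --- and still compactly generated in case~(2) --- so that we may assume $G$ abelian. Since $A$ is locally compact with no small subgroups, the solution of Hilbert's fifth problem makes $A$ a Lie group, and abelianness plus splitting off the (divisible, hence injective) identity component gives $A\cong\mathbb R^{\ell}\times\mathbb T^{n}\times D$ with $D$ discrete; when $A$ is compact, $\ell=0$ and $D$ is finite. As $\mathcal C_{k}(G,-)$ sends finite products to products, $Hom_{c}(G,A)$ becomes a finite product of copies of $Hom_{c}(G,\mathbb T)=\widehat G$ (locally compact, being a Pontryagin dual of a locally compact abelian group), of $Hom_{c}(G,\mathbb R)$, and of $Hom_{c}(G,D)$. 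In case~(1) there is no $\mathbb R$-factor, $D=F$ is finite, and an embedding $F\hookrightarrow\mathbb T^{m}$ realizes $Hom_{c}(G,F)$ as a closed subgroup $\{\chi\in\widehat G^{m}:\chi(G)\subseteq F\}$ of $\widehat G^{m}$, hence locally compact. In case~(2) the structure theorem for compactly generated locally compact abelian groups writes $G\cong\mathbb R^{a}\times\mathbb Z^{b}\times C$ with $C$ compact, whence $Hom_{c}(G,\mathbb R)\cong\mathbb R^{a+b}$ (finite dimensional precisely because $G$ is compactly generated) and $Hom_{c}(G,D)$ is discrete (no nontrivial continuous maps $\mathbb R^{a}\to D$, only $D^{b}$ from the free part, and $Hom_{c}(C,D)$ discrete because $C$ is compact and $D$ discrete). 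A finite product of locally compact groups is locally compact, so in both cases $Hom_{c}(G,A)$ is locally compact.

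The easy inputs are Tychonoff, the closedness facts from Section~\ref{section 2}, and the no-small-subgroups trick; these settle (3)--(6) with little effort. The real content, and the main obstacle, is in (1) and (2): after the reduction to abelian $G$, one must use the solution of Hilbert's fifth problem to force the shape $A\cong\mathbb R^{\ell}\times\mathbb T^{n}\times D$ and then check that the compact-open topology on each of $Hom_{c}(G,\mathbb T)$, $Hom_{c}(G,\mathbb R)$, $Hom_{c}(G,D)$ agrees with the expected locally compact topology --- Pontryagin duality for the torus factor, and the structure theorem for (compactly generated) locally compact abelian groups for the line and discrete factors. Pinning down the identification $Hom_{c}(G,\mathbb R)\cong\mathbb R^{a+b}$, and seeing that it is exactly compact generation of $G$ that keeps it finite dimensional --- hence that this hypothesis is indispensable in (2), while its absence in (1) costs nothing because a compact $A$ has no line factor --- is the step I would expect to require the most care.
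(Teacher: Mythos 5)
Your handling of (4), (5) and (6) is exactly the paper's: for discrete $G$ the compact-open topology on $\mathcal{C}(G,A)$ is the point-open topology, so $\mathcal{C}_k(G,A)$ is the Tychonoff product $\prod_{g\in G}A$, and $Hom_c(G,A)$, $Der_c(G,A)$ sit inside it as closed (hence compact) subspaces by Corollary \ref{cor 2.19} and Proposition \ref{prop 2.17}. Where you genuinely diverge is in (1)--(3). The paper performs the same reduction to $G/\overline{[G,G]}$ via Lemma \ref{lem 3.7} and then simply cites Moskowitz \cite{10} (two corollaries on p.~377 for (1)--(2), Theorem 4.1 for (3)); you instead reprove those results from scratch: the ``no nontrivial subgroup inside $[G,V]$'' argument for (3), and for (1)--(2) the structure theory $A\simeq\mathbb{R}^{\ell}\times\mathbb{T}^{n}\times D$ of locally compact abelian groups without small subgroups, the decomposition $G\simeq\mathbb{R}^{a}\times\mathbb{Z}^{b}\times C$ in the compactly generated case, and local compactness of the Pontryagin dual. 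Your route is longer but self-contained, and it buys two things the citation hides: it shows precisely where compact generation enters (keeping $Hom_c(G,\mathbb{R})\simeq\mathbb{R}^{a+b}$ finite-dimensional), and it makes clear that (3) requires no local compactness of $A$. The only steps in your sketch that merit an explicit line are that the splitting $A\simeq A_{0}\times(A/A_{0})$ is topological and not merely algebraic (true because $A_{0}$ is open and divisible, so any algebraic section of the discrete quotient is continuous and the resulting bijection is open), and that $Hom_c(G,-)$ carries finite products of targets to topological products in the compact-open topology; both are routine, so the proposal stands as a correct, more elementary and slightly more general alternative to the paper's proof.
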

\begin{proof} Since $A$ is abelian, by Lemma \ref{lem 3.7},  $Hom_{c}(G,A)\simeq Hom_{c}(G/\overline{[G,G]},A)$. Therefore, (1) and (2) follow from two corollaries in page 377 of \cite{10}. Also (3) is obtained by  Theorem 4.1 of \cite{10}.
\par (4) Since $G$ is discrete, then $\mathcal{C}_{k}(G,A)=\mathcal{C}_{p}(G,A)$. By Corollary \ref{cor 2.19}, $Hom_{c}(G,A)$ is closed in $\mathcal{C}_{k}(G,A)$. Let $B=\prod_{g\in G}A_{g}$, where $A_{g}=A, \forall g \in G$. It is clear that the map $\Phi:\mathcal{C}_{p}(G,A)\rightarrow B$, $f\mapsto \{f(g)\}_{g\in G}$ is  continuous. In addition, since $G$ is discrete, then the map $G\times B\rightarrow A$ ,$(h,\{a_{g}\}_{g\in G})\mapsto a_{h}$ is continuous. Hence, this map induces the continuous map $\Psi:B\rightarrow \mathcal{C}_{p}(G,A)$, $\{a_{g}\}_{g\in G}\mapsto f$, where $f(g)=a_{g}$. Obviously, $\Phi\Psi=Id$ and $\Psi\Phi=Id$. Consequently, $\mathcal{C}_{p}(G,A)$ is homeomorphic to $B$. Thus, $\mathcal{C}_{p}(G,A)$ is compact. So $Hom_{c}(G,A)$ is compact.
\par (5) This is an immediate result from (3) and (4).
\par (6) By Proposition \ref{prop 2.17}, $Der_{c}(G,A)$ is closed in $\mathcal{C}_{k}(G,A)$. We have seen in the proof of (4) that $\mathcal{C}_{k}(G,A)$ is compact. Consequently, $Der_{c}(G,A)$ is compact.
\end{proof}
 Recall that a topological space $X$ is called a $k$-space if every
subset of $X$, whose intersection with every compact $K\subset X $ is relatively
open in $K$, is open in $X$. A topological space $X$ is a $k$-space if and only if $X$ is the quotient image of a locally compact space (see Characterization (1) of \cite{14}). For example, locally compact spaces and first-countable spaces are $k$-spaces. It is well-known that the $k$-space  property is preserved by the closed {subsets and the quotients}. Also, the product of a locally compact space with a $k$-space is a $k$-space (see Result (1) of \cite{14}). We call a topological group {to be} a $k$-group if it is a $k$-space as a topological space.
\begin{thm} Let $G$ be a locally compact group; let $(A,\mu)$ be a partially crossed topological $G-R$-bimodule such that $G$ acts trivially on $A$ and $R$.
\begin{itemize}
  \item[(1)] If $R$ is a  $k$-group and $A$ is compact  without small subgroups, then $H^{1}(G,(A,\mu))$ is a $k$-space.

  \item[(2)] If $G$ is compactly generated, $R$ is a $k$-group and $A$ is locally compact  without small subgroups, then $H^{1}(G,(A,\mu))$ is a $k$-space.
  \item[(3)] If $G$  is  compact, $A$  has no small subgroups and $R$ is discrete, then $H^{1}(G,(A,\mu))$ is discrete.
  \item[(4)] If $G$  and $R$ are finite discrete and $A$ is compact without small subgroups, then $H^{1}(G,(A,\mu))$ is a finite discrete space.

\end{itemize}
\label{thm 2.9}\end{thm}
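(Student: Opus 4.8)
The plan is to reduce all four assertions to Proposition~\ref{prop 3.8}, after describing $Der_{c}(G,(A,\mu))$ explicitly. First I would unwind the definitions using the two triviality hypotheses. Since $G$ acts trivially on $A$, a continuous crossed homomorphism $\alpha:G\to A$ is merely a continuous homomorphism, because $\alpha(gh)=\alpha(g)\,{}^{g}\alpha(h)=\alpha(g)\alpha(h)$. Since $G$ acts trivially on $R$, for $(\alpha,r)\in Der_{c}(G,(A,\mu))$ the defining relation becomes $\mu\alpha(g)=r\,{}^{g}r^{-1}=1$ for all $g\in G$, i.e. $\alpha(G)\subseteq\ker\mu$; and $\ker\mu$ is a closed subgroup of $A$, since $A$ is Hausdorff and $\mu$ is continuous. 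Hence, as a set, $Der_{c}(G,(A,\mu))=Hom_{c}(G,\ker\mu)\times R$, and this is a homeomorphism for the natural topologies: the topology of the left-hand side is the one induced from $\mathcal{C}_{k}(G,A)\times R$, and the compact-open topology on $Hom_{c}(G,\ker\mu)$ coincides with the subspace topology it inherits from $\mathcal{C}_{k}(G,A)$ (every open subset of $\ker\mu$ being the trace of an open subset of $A$). I would also record that $H^{0}(G,R)=R$, again because $G$ acts trivially on $R$. In particular $H^{1}(G,(A,\mu))$ is a quotient space of $Hom_{c}(G,\ker\mu)\times R$.

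Next I would note that $\ker\mu$, being a closed subgroup of $A$, inherits the hypotheses needed below: it is abelian whenever $A$ is, compact whenever $A$ is compact, locally compact whenever $A$ is locally compact, and it has no small subgroups whenever $A$ has none (intersect a suitable identity neighbourhood of $A$ with $\ker\mu$). Thus Proposition~\ref{prop 3.8} applies with $\ker\mu$ in place of $A$. For (1): Proposition~\ref{prop 3.8}(1) gives that $Hom_{c}(G,\ker\mu)$ is locally compact, hence $Hom_{c}(G,\ker\mu)\times R$ is the product of a locally compact space with a $k$-group, so a $k$-space by Result~(1) of \cite{14}; since the $k$-space property passes to quotients, $H^{1}(G,(A,\mu))$ is a $k$-space. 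Case (2) is the same argument, invoking Proposition~\ref{prop 3.8}(2), which is legitimate because $G$ is locally compact and compactly generated. For (3): Proposition~\ref{prop 3.8}(3) makes $Hom_{c}(G,\ker\mu)$ discrete, and with $R$ discrete the product $Hom_{c}(G,\ker\mu)\times R$ is discrete; every quotient of a discrete space is discrete, so $H^{1}(G,(A,\mu))$ is discrete. For (4): a finite discrete group is compact, so Proposition~\ref{prop 3.8}(5) yields $Hom_{c}(G,\ker\mu)$ finite and discrete; with $R$ finite and discrete, the product, and hence its quotient $H^{1}(G,(A,\mu))$, is finite and discrete.

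The only step carrying genuine content is the identification $Der_{c}(G,(A,\mu))\cong Hom_{c}(G,\ker\mu)\times R$ of the first paragraph; the remainder is bookkeeping with Proposition~\ref{prop 3.8} and with the cited permanence properties of $k$-spaces, discreteness and finiteness. The one point to check with care is the coincidence of the compact-open topology on $Hom_{c}(G,\ker\mu)$ with the topology it inherits inside $\mathcal{C}_{k}(G,A)$, which holds because $\ker\mu$ carries the subspace topology and the compact-open topology is determined by values on compacta. (One may also observe that, since $H^{0}(G,R)=R$, the relation $\sim'$ imposes no condition on the $R$-coordinate, so in fact $H^{1}(G,(A,\mu))$ is homeomorphic to $Hom_{c}(G,\ker\mu)$ modulo $A$-conjugacy and the hypotheses on $R$ are more than is strictly needed; the argument above does not use this, however.)
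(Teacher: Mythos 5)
Your reduction is the same as the paper's: identify $Der_{c}(G,(A,\mu))$ with $Hom_{c}(G,\ker\mu)\times R$ (using the trivial actions to turn crossed homomorphisms into homomorphisms and to force $\mu\alpha=1$), feed the factors into Proposition~\ref{prop 3.8}, and then use that the $k$-space, discreteness and finiteness properties pass to products with $R$ and to quotients. That part, including the check that the compact-open topology on $Hom_{c}(G,\ker\mu)$ agrees with the one inherited from $\mathcal{C}_{k}(G,A)$, is fine.

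There is, however, one genuine gap. Parts (1), (2), (3) and (5) of Proposition~\ref{prop 3.8}, which are the ones you invoke, all require the target group to be \emph{abelian}, and you justify abelianness of $\ker\mu$ only by saying it is ``abelian whenever $A$ is''; but $A$ is not assumed abelian in the theorem, so as written the proposition does not apply. The missing observation is precisely where the partially crossed structure enters: if $a\in\ker\mu$ then $\mu(a)=1\in[R,R]$, so the partial Peiffer identity gives $b={}^{1}b={}^{\mu(a)}b={}^{a}b=aba^{-1}$ for every $b\in A$, i.e. $\ker\mu\subseteq Z(A)$. Hence $\ker\mu$ is a closed abelian subgroup of $A$ (this is the content of the paper's remark that $\ker\mu$ is closed in $Z(A)$), and Proposition~\ref{prop 3.8} then applies to $\ker\mu$. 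With that one line added, your argument is complete.
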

\begin{proof} Since $G$ acts trivially on $A$ and $R$, then it is easy to see that $Der_{c}(G,(A,\mu))$ is homeomorphic to  $Hom_{c}(G,Ker\mu)\times R$. Note that $Ker\mu$ is closed in $Z(A)$. Now by  Proposition \ref{prop 3.8}, the assertions (1) to (4) hold.
\end{proof}
\begin{thm} Let $G$ be a locally compact abelian topological group; let $(A,\mu)$ be a partially crossed topological $G$-module and $A$ a trivial $G$-module.
\begin{itemize}
  \item[(1)] If $A$ is compact  without small subgroups, then $H^{1}(G,(A,\mu))$ is a locally compact abelian group.

  \item[(2)] If $G$ {is} compactly generated  and $A$ is locally compact  without small subgroups, then $H^{1}(G,(A,\mu))$ is a locally compact abelian group.
  \item[(3)] If $G$ is finite discrete  and $A$ is compact without small subgroups, then $H^{1}(G,(A,\mu))$ is a finite discrete abelian group.
\end{itemize}
\label{thm 2.10}\end{thm}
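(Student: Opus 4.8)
The plan is to identify $H^{1}(G,(A,\mu))$ with a group of homomorphisms of the form $Hom_{c}(G,\ker\mu)$ and then read off each of the three conclusions directly from Proposition \ref{prop 3.8}. Since $(A,\mu)$ is a partially crossed topological $G$-module we regard it as a crossed topological $G-G$-bimodule, so here $R=G$; because $G$ is abelian, the conjugation action of $G$ on $R=G$ is trivial, and $A$ is a trivial $G$-module by hypothesis. Thus we are in the situation treated at the start of the proof of Theorem \ref{thm 2.9}: a continuous crossed homomorphism $\alpha\colon G\to A$ is simply a continuous homomorphism, and the defining condition $\mu\alpha(g)={}^{g}r\,r^{-1}=1$ forces $\mu\alpha\equiv\mathbf 1$ while leaving $r$ unrestricted in $G$. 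The partial Peiffer identity (with trivial action and $[G,G]=\{1\}$) gives $\ker\mu\subseteq Z(A)$, so $\ker\mu$ is abelian and the group law of $Der_{c}$ reduces to the pointwise product. Hence the first step is to establish a topological group isomorphism
$$Der_{c}(G,(A,\mu))\ \simeq\ Hom_{c}(G,\ker\mu)\times G,$$
where $\ker\mu$ carries the subspace topology from $A$ and the compact-open topology of $\mathcal C_{k}(G,\ker\mu)$ agrees with the subspace topology induced from $\mathcal C_{k}(G,A)$ (as $\ker\mu$ is closed in $A$).

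The second step is to compute $Inn(G,(A,\mu))$. Because $G$ acts trivially on $A$ we have $Inn(a)(g)=a\cdot{}^{g}(a^{-1})=aa^{-1}=1$, so $Inn(a)=\mathbf 1$ for every $a\in A$; and because $G$ is abelian, $H^{0}(G,G)=Z(G)=G$. Therefore $Inn(G,(A,\mu))=\{(Inn(a),\mu(a)z):a\in A,\ z\in H^{0}(G,G)\}=\{\mathbf 1\}\times G$. By Theorem \ref{thm 3.6} the space $H^{1}(G,(A,\mu))$ is a topological group, so by Notice \ref{notice 3.3}(iii) it equals $Der_{c}(G,(A,\mu))/Inn(G,(A,\mu))$; under the isomorphism above this is the quotient of $Hom_{c}(G,\ker\mu)\times G$ by $\{\mathbf 1\}\times G$. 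Since the first-coordinate projection is an open continuous surjective homomorphism with kernel $\{\mathbf 1\}\times G$, I conclude
$$H^{1}(G,(A,\mu))\ \simeq\ Hom_{c}(G,\ker\mu).$$
This is exactly where the statement improves on Theorem \ref{thm 2.9}: there the factor $R$ persists and one obtains only a $k$-space, whereas here it is quotiented away, leaving a genuine $Hom$-group. Note also that $Hom_{c}(G,\ker\mu)$ is Hausdorff, being a subgroup of $\mathcal C_{k}(G,\ker\mu)$ with $\ker\mu$ Hausdorff.

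The last step is to invoke Proposition \ref{prop 3.8} with coefficient group $\ker\mu$, which is a closed (hence abelian) subgroup of $A$, inherits "no small subgroups" from $A$ (a subgroup contained in a subgroup-free neighbourhood of $1$ is trivial), is compact whenever $A$ is, and locally compact whenever $A$ is. For (1): $G$ is locally compact and $\ker\mu$ is compact abelian without small subgroups, so Proposition \ref{prop 3.8}(1) makes $Hom_{c}(G,\ker\mu)$ locally compact (and it is abelian since $\ker\mu$ is). For (2): $G$ is locally compact and compactly generated and $\ker\mu$ is locally compact abelian without small subgroups, so Proposition \ref{prop 3.8}(2) applies. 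For (3): $G$ is finite discrete and $\ker\mu$ is compact abelian without small subgroups, so Proposition \ref{prop 3.8}(5) makes $Hom_{c}(G,\ker\mu)$ finite discrete. In each case the corresponding assertion for $H^{1}(G,(A,\mu))$ transfers through the isomorphism above. I do not expect a serious obstacle: the only points requiring care are verifying that $Der_{c}(G,(A,\mu))$ genuinely splits as the stated direct product \emph{of topological groups} and that $Inn(G,(A,\mu))$ is precisely $\{\mathbf 1\}\times G$ — both of which rest on the triviality of the $G$-actions and the abelianness of $G$ — after which the result is a direct citation of Theorem \ref{thm 3.6} and Proposition \ref{prop 3.8}.
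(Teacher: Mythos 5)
Your proof is correct and follows essentially the same route as the paper: the paper's proof simply asserts $Der_{c}(G,(A,\mu))\simeq Hom_{c}(G,\mathrm{Ker}\,\mu)\times G$ and cites Proposition \ref{prop 3.8}, and the identification $H^{1}(G,(A,\mu))\simeq Hom_{c}(G,A|\mathrm{Ker}\,\mu)$ you derive is exactly Remark \ref{rem 3.11}(1). You merely fill in the details the paper leaves implicit (that $\mathrm{Ker}\,\mu\subseteq Z(A)$ via the partial Peiffer identity, and that $Inn(G,(A,\mu))=\{\mathbf{1}\}\times G$ so the second factor is quotiented away), which is a welcome but not divergent elaboration.
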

\begin{proof} Since $G$ is a locally compact abelian group and acts trivially on $A$, one can see $Der_{c}(G,(A,\mu)) \simeq Hom_{c}(G,Ker\mu)\times G$. Therefore, by Proposition \ref{prop 3.8}, the proof is completed.
\end{proof}
Let $G$ and $A$ be topological groups; let $K$ be an abelian subgroup of $A$. We denote the set of all continuous homomorphisms $f:G\to A$ with $f(G)\subset K$ by $Hom_{c}(G,A|K)$. Obviously, if $G$ is locally compact, then $Hom_{c}(G,A|K)$ with compact-open topology is an abelian topological group.
\begin{rem} (1) Let $(A,\mu)$ be a partially crossed topological $G$-module. Suppose that $G$ is a locally compact abelian group {which acts} trivially on $A$. Then, $H^{1}(G,(A,\mu))\simeq Hom_{c}(G,A|Ker\mu)$.
\begin{itemize}  \item[(2)] Let $A$ be an  abelian topological $G$-module. Then, $(A,\mathbf{1})$ is a crossed topological $G-R$-bimodule for every topological group $R$, and $H^{1}(G,(A,\mathbf{1}))$ is homeomorphic to $H^{1}(G,A)$.
\item[(3)] Let $G$ be a locally compact group and $A$ an abelian topological $G$-module. Then,
 $(A,\mathbf{1})$ is a crossed topological $G$-module, and $H^{1}(G,(A,\mathbf{1})) \simeq H^{1}(G,A)$. In particular if $G$ acts trivially on $A$, then $H^{1}(G,(A,\mathbf{1})) \simeq Hom_{c}(G/\overline{[G,G]},A)$.
\item[(4)] Let $G$ be a locally compact group and $A$ an abelian topological $G$-module. Then,
     $H^{1}(G,(A,\pi_{A}))=H^{1}(G,(A,\mathbf{1}))\simeq H^{1}(G,A)$.
\end{itemize}
\label{rem 3.11}\end{rem}
\begin{thm} Let $(A,\mu)$ be a partially crossed topological $G-R$-bimodule. Suppose that $G$ is a discrete group, $A$ and $R$ are compact. Then, $H^{1}(G,(A,\mu))$ is compact.
\label{thm 3.12}\end{thm}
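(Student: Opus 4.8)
The plan is to realize $H^{1}(G,(A,\mu))$ as a continuous image of $Der_{c}(G,(A,\mu))$ and to show that the latter is compact; compactness then passes to the quotient. Notice that nothing about the group structure is needed here, so neither Theorem \ref{thm 3.2} nor Theorem \ref{thm 3.6} (nor any hypothesis on $H^{0}(G,R)$ or on centers) will be invoked, and Hausdorffness plays no role.

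First I would exploit the discreteness of $G$. Since every subset of $G$ is compact and open, the compact-open topology on $\mathcal{C}_{k}(G,A)$ agrees with the point-open topology, and in fact $\mathcal{C}_{k}(G,A)=\mathcal{C}_{p}(G,A)$ is the full function space $A^{G}$ with the product topology, the homeomorphism being $f\mapsto (f(g))_{g\in G}$ exactly as in the proof of Proposition \ref{prop 3.8}(4). As $A$ is compact, Tychonoff's theorem gives that $\mathcal{C}_{k}(G,A)\simeq A^{G}$ is compact, and hence $\mathcal{C}_{k}(G,A)\times R$ is compact, being the product of two compact spaces.

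Next I would appeal to Corollary \ref{cor 2.18}, which states that $Der_{c}(G,(A,\mu))$ is a closed subspace of $\mathcal{C}_{k}(G,A)\times R$. A closed subspace of a compact space is compact, so $Der_{c}(G,(A,\mu))$ is compact. Finally, by Definition \ref{def 3.1} the set $H^{1}(G,(A,\mu))=Der_{c}(G,(A,\mu))/{\sim'}$ carries the quotient topology, so the canonical projection $q\colon Der_{c}(G,(A,\mu))\to H^{1}(G,(A,\mu))$ is continuous and surjective; therefore $H^{1}(G,(A,\mu))=q\bigl(Der_{c}(G,(A,\mu))\bigr)$ is compact.

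There is essentially no real obstacle once Corollary \ref{cor 2.18} is available; the only points deserving a line of care are the identification $\mathcal{C}_{k}(G,A)=A^{G}$ for discrete $G$ (so that Tychonoff applies verbatim) and the remark that, in contrast with the Hausdorff refinements pursued elsewhere in this section, compactness is obtained with no further assumptions.
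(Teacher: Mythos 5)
Your proof is correct and follows essentially the same route as the paper: the paper deduces compactness of $Der_{c}(G,(A,\mu))$ from its closedness (Proposition \ref{prop 2.17}) inside the compact space $Der_{c}(G,A)\times R$ (compactness of $Der_{c}(G,A)$ coming from Proposition \ref{prop 3.8}(6), i.e.\ from the same identification $\mathcal{C}_{k}(G,A)\simeq A^{G}$ and Tychonoff), and then passes to the quotient. Your version merely collapses the intermediate step by working directly in $\mathcal{C}_{k}(G,A)\times R$ via Corollary \ref{cor 2.18}, and is if anything slightly more explicit than the paper's one-line argument.
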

\begin{proof} By Proposition \ref{prop 2.17}, $Der_{c}(G,(A,\mu))$ is closed in $Der_{c}(G,A)\times R$. Obviously, if $R$ is compact, then $H^{1}(G,(A,\mu))$ is compact.
\end{proof}
 As an immediate result of Theorem \ref{thm 3.12}, we have the following corollary:
 \begin{cor}Let $(A,\mu)$ be a partially crossed topological $G$-module, $G$ {be} finite discrete and $A$ {be compact. Then} $H^{1}(G,(A,\mu))$ is a compact group.
 \label{cor 3.13}\end{cor}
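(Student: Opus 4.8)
The plan is to obtain this corollary as an immediate consequence of Theorem \ref{thm 3.12} together with Theorem \ref{thm 3.6}, exploiting the identification recalled just above the statement: a partially crossed topological $G$-module $(A,\mu)$ is naturally a (partially) crossed topological $G-G$-bimodule, and it is under this identification --- i.e.\ with $R=G$ --- that $H^{1}(G,(A,\mu))$ was defined in this setting. So the whole argument reduces to verifying that the hypotheses of those two theorems are met when $R=G$ and $G$ is finite discrete.

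First I would record the elementary observations that a finite discrete group $G$ is compact (hence locally compact) and that its centre $Z(G)=H^{0}(G,G)$ is finite, hence compact. With these in hand, Theorem \ref{thm 3.12} applies with $R=G$ --- legitimate since $(A,\mu)$, viewed as a partially crossed topological $G-G$-bimodule, has $G$ discrete and both $A$ and $G$ compact --- and yields that $H^{1}(G,(A,\mu))$ is compact as a topological space. Separately, Theorem \ref{thm 3.6} applies because $G$ is locally compact and $(A,\mu)$ is a partially crossed topological $G$-module: it gives that $H^{1}(G,(A,\mu))$ is a topological group, and part (iii) of that theorem --- using that $A$ and $Z(G)$ are compact --- gives that it is Hausdorff. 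Combining, $H^{1}(G,(A,\mu))$ is a compact Hausdorff topological group, which is the assertion.

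This is really bookkeeping rather than a fresh theorem, so there is no substantial obstacle; the only points needing a line of justification are that the $G-G$-bimodule reinterpretation preserves the ``partially crossed'' property (in the spirit of Remark \ref{rem 2.10}), that $R=G$ thereby inherits the finiteness and compactness assumptions, and the identity $H^{0}(G,G)=Z(G)$ used when invoking Theorem \ref{thm 3.6}. Once these are noted, both cited results apply essentially verbatim and the corollary follows at once.
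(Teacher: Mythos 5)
Your proposal is correct and follows essentially the same route as the paper, which derives this corollary directly from Theorem \ref{thm 3.12} applied to the associated $G-G$-bimodule. You merely make explicit the supporting appeal to Theorem \ref{thm 3.6} for the topological-group structure (and Hausdorffness), which the paper leaves implicit in calling the corollary ``immediate.''
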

\begin{defn} A topological group $A$ is radical-based, if it has a countable base $\{U_{n}\}_{n\in \mathbb{N}}$
at 1, such that each $U_{n}$ is symmetric and for all $n\in \mathbb{N}$:
\begin{itemize}
 \item[(1)] $(U_{n})^{n} \subset U_{1}$;
 \item[(2)] $a, a^{2},...,a^{n}\in U_{1}$ implies $a\in U_{n}$.
 \end{itemize}
\label{def 3.14}\end{defn}
For example, if $n$ is a positive integer, then the $n$-dimensional vector group, the $n$-dimensional torus  and the rational numbers are radical-based groups. For another example see \cite{8}.
\begin{thm} Let $(A,\mu)$ be a partially crossed topological $G-R$-bimodule, and $G$ a first countable group. Let  $R$ be locally compact and $A$ a compact radical-based group with $H^{0}(G,A)=A$. Then, $H^{1}(G,(A,\mu))$ is a $k$-space.
\label{thm 3.15}\end{thm}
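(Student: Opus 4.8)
The plan is to show that $Der_c(G,(A,\mu))$ is a $k$-space and then to transfer this property to the quotient $H^1(G,(A,\mu))$, which is a quotient image of $Der_c(G,(A,\mu))$ by the equivalence relation $\sim'$; since a quotient of a $k$-space is again a $k$-space, it suffices to handle $Der_c(G,(A,\mu))$. Since $H^0(G,A)=A$, every element of $A$ acts trivially on $A$ under the $G$-action, so the action of $G$ on $A$ contributes nothing to the coefficient structure; as in the proof of Theorem \ref{thm 2.9}, I would first identify $Der_c(G,(A,\mu))$ (as a topological space) with a subspace of $\mathcal{C}_k(G,A)\times R$. Because $R$ is locally compact, the product of a locally compact space with a $k$-space is a $k$-space (Result (1) of \cite{14}), so it is enough to prove that the relevant subspace of $\mathcal{C}_k(G,A)$ — which by Proposition \ref{prop 2.17} and Corollary \ref{cor 2.18} is a closed subspace, hence inherits the $k$-space property from $\mathcal{C}_k(G,A)$ — is a $k$-space. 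Thus the whole matter reduces to showing that $\mathcal{C}_k(G,A)$ is a $k$-space when $G$ is first countable and $A$ is compact radical-based.

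For that core step I would invoke the structure of radical-based groups: the point is that compactness of $A$ together with the radical-based base $\{U_n\}$ gives enough uniform control on continuous maps $G\to A$ that $\mathcal{C}_k(G,A)$, with the compact-open topology, is metrizable or at least first countable on each compact piece — more precisely, I expect that $\mathcal{C}_k(G,A)$ is itself first countable (a countable neighborhood base at the trivial map is furnished by the sets $[K_n,U_m]$ where $\{K_n\}$ exhausts $G$ by compacta coming from first countability of $G$ combined with local compactness considerations, and $\{U_m\}$ is the radical-based base of $A$), and first-countable spaces are $k$-spaces. The radical-based axioms (1) and (2) are exactly what is needed to show this countable family is a genuine neighborhood base: axiom (1) controls products of elements so that the group operation behaves well, and axiom (2) provides the reverse containment that prevents the topology from being strictly finer than the one generated by the $U_m$.

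The main obstacle will be the reduction to a countable cofinal family of compact subsets of $G$: first countability of $G$ gives a countable base at the identity but not automatically a countable family of compacta that is cofinal in the compact subsets (this is where one must be careful, since $G$ need not be locally compact or $\sigma$-compact). I would address this by exploiting that $A$ is radical-based together with compactness: a continuous homomorphism-like map into such an $A$ is determined, up to a prescribed $U_n$-error, by its restriction to a suitable small neighborhood, so that the compact-open topology coincides with a coarser topology for which a countable base is visible; alternatively, one shows directly that every subset of $\mathcal{C}_k(G,A)$ meeting each compact set in a relatively open set is open, by testing against the compact images $\{[K,U_m]\}$-neighborhoods and using axiom (2) to promote pointwise control to uniform control on $K$. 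Once $\mathcal{C}_k(G,A)$ is established to be a $k$-space, the closed subspace $Der_c(G,(A,\mu))$ in $\mathcal{C}_k(G,A)\times R$ is a $k$-space by Result (1) of \cite{14}, and finally $H^1(G,(A,\mu))$, being a quotient of it, is a $k$-space as claimed.
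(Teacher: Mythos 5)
Your overall skeleton is the right one (and matches the paper's): realize $Der_c(G,(A,\mu))$ as a closed subspace of a product of a $k$-space with the locally compact group $R$, use Result (1) of \cite{14} for the product, closedness for the subspace, and then pass to the quotient. But the core step is where the argument breaks. You reduce everything to the claim that the full mapping space $\mathcal{C}_{k}(G,A)$ is a $k$-space, and that claim is both unnecessary and false in general under the stated hypotheses: $G$ is only assumed first countable, not locally compact or hemicompact, and for such $G$ (e.g.\ $G=\mathbb{Q}$) the compact-open function space into even a compact metrizable target typically fails to be a $k$-space. Your own sketch of why $\mathcal{C}_k(G,A)$ should be first countable already presupposes "an exhaustion of $G$ by compacta coming from first countability combined with local compactness considerations" --- but there are no local compactness hypotheses on $G$, and a first countable group need not admit a countable cofinal family of compact subsets. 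You flag this obstacle honestly, but the proposed repair (a map is determined up to a $U_n$-error by its restriction to a small neighborhood) is a property of \emph{homomorphisms}, not of arbitrary continuous maps, so it cannot rescue a statement about all of $\mathcal{C}_k(G,A)$.

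The missing idea is to use the hypothesis $H^{0}(G,A)=A$ at the level of the coefficient space rather than as a throwaway remark: triviality of the $G$-action on $A$ turns every continuous crossed homomorphism into a genuine continuous homomorphism, so $Der_c(G,A)=Hom_c(G,A)$, and by Proposition \ref{prop 2.17} the group $Der_c(G,(A,\mu))$ is closed in $Hom_c(G,A)\times R$. The $k$-space property is then supplied by Theorem 1 of \cite{8} (Luk\'acs), which asserts precisely that $Hom_c(G,A)$ is a $k$-space when $G$ is first countable (metrizable) and $A$ is a compact radical-based group; this is a theorem about the homomorphism space specifically, and the group structure is essential to it. With that substitution your remaining steps (product with $R$, closed subspace, quotient) go through exactly as you wrote them.
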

\begin{proof} Since $H^{0}(G,A)=A$, then it follows from Proposition \ref{prop 2.17} that $Der_{c}(G,(A,\mu))$ is closed in $Hom_{c}(G,A)\times R$. By Theorem 1 of \cite{8}, $Hom_{c}(G,A)$ is a $k$-space. Thus, $Hom_{c}(G,A)\times R$ is a $k$-space. Consequently, $H^{1}(G,(A,\mu))$ is a $k$-space.
\end{proof}
 By Theorem \ref{thm 3.15}, the next corollary is immediate.
\begin{cor} Let $(A,\mu)$ be a partially crossed topological $G$-module, let $G$ be locally compact first countable and $A$ a compact radical-based group with $H^{0}(G,A)=A$. Then, $H^{1}(G,(A,\mu))$ is a $k$-group.
\label{cor 3.16}\end{cor}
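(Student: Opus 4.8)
The plan is to reduce Corollary \ref{cor 3.16} to Theorem \ref{thm 3.15} by the standard device that turns a crossed $G$-module into a crossed $G$-$G$-bimodule. First I would recall that, as remarked just before Theorem \ref{thm 3.6}, a partially crossed topological $G$-module $(A,\mu)$ is naturally a crossed topological $G$-$G$-bimodule, so that $H^{1}(G,(A,\mu))$ in the sense of Definition \ref{def 3.1} is exactly the object to which Theorem \ref{thm 3.15} applies, with $R$ taken to be $G$ itself.

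Next I would check that the hypotheses of Corollary \ref{cor 3.16} imply those of Theorem \ref{thm 3.15} with $R=G$. Indeed, $G$ is assumed locally compact and first countable, hence in particular $G$ is a first countable group and $R=G$ is locally compact, matching the two requirements on $G$ and on $R$ in Theorem \ref{thm 3.15}. The coefficient group $A$ is assumed compact radical-based with $H^{0}(G,A)=A$, which is literally the remaining hypothesis of Theorem \ref{thm 3.15}. So every condition needed to invoke Theorem \ref{thm 3.15} is in place.

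Then I would simply apply Theorem \ref{thm 3.15} to conclude that $H^{1}(G,(A,\mu))$ is a $k$-space. To upgrade ``$k$-space'' to ``$k$-group'' I would note that, since $G$ is locally compact here (not merely first countable), Theorem \ref{thm 3.6} applies and shows $H^{1}(G,(A,\mu))$ is a topological group; being simultaneously a topological group and a $k$-space, it is by definition a $k$-group. This last observation is the only extra ingredient beyond a direct citation of Theorem \ref{thm 3.15}.

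I do not anticipate a genuine obstacle: the corollary is a specialization, and the one point that deserves a sentence is the verification that ``$G$ locally compact first countable'' is strong enough to supply both the first-countability of $G$ and the local compactness of $R=G$ demanded by Theorem \ref{thm 3.15}, together with the remark that local compactness of $G$ (via Theorem \ref{thm 3.6}) is what promotes the $k$-space to a $k$-group.
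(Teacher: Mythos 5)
Your proposal is correct and follows the paper's intended route: the paper derives the corollary directly from Theorem \ref{thm 3.15} by viewing the partially crossed $G$-module as a $G$-$G$-bimodule with $R=G$ locally compact, and the group structure needed to upgrade ``$k$-space'' to ``$k$-group'' is exactly the one supplied by Theorem \ref{thm 3.6}. Your explicit verification of the hypotheses and of the final upgrade step is precisely the content the paper leaves implicit when it calls the corollary ``immediate.''
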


\noindent \Large\textbf{Acknowledgment}\\[2mm]
\footnotesize The authors gratefully acknowledge the financial support for this work that was provided by University of Guilan.\\[3mm]

\end{document}